\title{Self-similar measures with unusual local dimension properties}
\author{Kevin G. Hare}
\address{Department of Pure Mathematics, University of Waterloo, Waterloo, Ontario, Canada N2L 3G1}
\email{kghare@uwaterloo.ca}
\thanks{Research of K.G. Hare was supported by NSERC Grant 2019-03930}
\newtheorem{thm}{Theorem}[section]
\newtheorem{lemma}[thm]{Lemma}
\newtheorem{notation}[thm]{Notation}
\newtheorem{prop}[thm]{Proposition}
\newtheorem{example}[thm]{Example}
\newtheorem{remark}[thm]{Remark}
\newtheorem{corollary}[thm]{Corollary}
\newtheorem{defn}[thm]{Definition}
\newcommand{\dimloc}{\dim_{\mathrm{loc}}}
\newcommand{\supp}{\mathrm{supp}}
\begin{document}
\begin{abstract}
Let $\mu$ be a self-similar measure satisfying the finite type condition.
It is known that the set of attainable local dimensions for such a measure
    is a union of disjoint intervals, where some intervals may be degenerate
    points.
Despite this, it has not been shown if this full complexity of 
    attainable local dimensions is achievable.
In this paper we give two different constructions.
The first is a measure $\mu$  where the set of all attainable local dimensions 
    is the union of an interval union and an arbitrary number of disjoint
    points.
The second is a measure $\mu$ where the set of all attainable local dimensions 
    is the union of an arbitrary number of disjoint intervals.
As an application to these construction, we study the multi-fractal 
    spectrum $f_\mu(\alpha)$ and the $L^q$-spectrum $\tau_\mu(q)$ of these measures.
We given an example of a $\mu$ where $f_\mu(\alpha)$ is not concave, and 
    where $\tau_\mu(q)$ has two points of non-differentiability. 
\end{abstract}
\maketitle

\section{Introduction}

Let $\mu$ be a finite Borel measure.  
The {\em local dimensions of $\mu$ at a point $x$} is a way to quantify the concentration of the 
    measure at $x$.
If the limit exists, it is defined as 
\begin{equation*}
\dimloc\mu (x)=\lim_{r\rightarrow 0^{+}}\frac{\log \mu
([x-r,x+r])}{\log r}.
\end{equation*}
A similar concept of upper local dimension and lower local dimension can 
    also be defined.
See Definition \ref{defn:dimloc}.

In the next section we will define what it means for a measure $\mu$ to be of finite 
    type.
For measures of finite type, satisfying some technical assumptions, it is relatively straightforward 
    to determine the (upper/lower) local dimension at a point $x \in \supp \mu$.
Further, it is often possible to say meaningful things about 
    the set of all attainable local dimensions.
It is often, but not always the case that this set is a interval.
The set of all attainable local dimensions for such a measure is always a union of disjoint intervals (where the intervals may be degenerate points) 
    \cite{HR}.

Examples of self-similar measures are known where the set of all attainable local dimensions is
\begin{itemize}
\item An interval: This is true for any self-similar measure satisfying the 
    open set condition.  See for instance \cite{Fa}. 
\item An interval and a disjoint point: This is true for the $m$-fold 
    convolution ($m \geq 3$) of the Cantor measure. See \cite{HL}.
\item Two disjoint intervals. See \cite[Section 7]{BHM} or \cite[Section 6]{Testud} for example.
\end{itemize}

Our first goal is to give two different constructions of self-similar 
    measure so that the set of attainable local dimensions have a higher
    degree of complexity than previously known examples.
The first construction is a measure $\mu$  where the set of all attainable local dimensions is the union 
    of an interval and an arbitrary number of disjoint points.
The second is a measure $\mu$ where the set of all attainable local dimensions is the union
    of an arbitrary number of disjoint intervals.

Our second goal is to apply these construction to the study of the multi-fractal spectrum and $L^q$-spectrum of $\mu$.
We define the {\em multi-fractal spectrum of $\mu$} as 
    \[ f_\mu(\alpha) = \dim_H \{ x \in \supp \mu: \dimloc \mu (x) = \alpha\}.\]
We further define the {\em $L^q$-spectrum of $\mu$} as 
    \[ \tau_{\mu}(q) = \liminf_{r \to 0} \frac {\log \sup \sum_i \mu (B(x_i, r))^q}{\log r}. \]
It is worth remarking that some $L^q$-spectrum is sometimes defined as the negative of this function.
The study of these two functions, and their relationship has a long history.
See for instance \cite{CM92, Fa, Lau99, NX, Pa97, Riedi95, Rutar} and 
    references therein.
Using these constructions, we can construct $\mu$ such that $f_\mu(\alpha)$ 
    is non-concave.

The study of the differentiability of $\tau_\mu(q)$ is also of interest,
    and in general poorly understood.
When $\mu$ satisfies the open set condition, then $\tau_\mu(q)$ is known to be 
    differentiable for all $q$.
Further it is known that $f_\mu(\alpha) = \tau_\mu^*(\alpha) = \inf_q (\alpha q - \tau(q))$ when
    $\mu$ satisfies the open set condition.
This relationship between $f_\mu(\alpha)$ and $\tau_\mu(q)$, when it exists, is known 
    as the {\em multi-fractal formalism}.
Special cases of this problem were studied in \cite{F1, LN1, LN2} for $\mu$ that were of finite type,
    but not satisfying the open set condition.
In these cases, $\tau_\mu(q)$ was proven to be differential, despite not satisfying the open set condition.
It was shown in \cite{F3} that for $q > 0$ that $\tau_\mu(q)$ is differentiable.
Examples of self-similar measures of finite type $\mu$ were given in \cite{F1} and \cite{LW} where their existed a point 
    where $\tau_\mu(q)$ was not differentiable.
In \cite{Testud}, a construction was given where $\tau_\mu(q)$ had an arbitrary number of 
    non-differentiable points.
The method used was to subdivide $\supp(\mu)$ into two sets, $A$ and $B$, 
    and considered the related functions $\tau_A(q)$ and $\tau_B(q)$ 
    restricted to these set.
It was then shown that $\mu$ could be constructed such that  $\tau_\mu(q) = \min(\tau_A(q),\tau_B(q))$,
     and that $\tau_A(q) = \tau_B(q)$ had an arbitrary number
     of solutions. 
Each of these solutions resulted in a point of non-differentiability.
See also \cite{DN} and references for further history on this problem.

In Section \ref{sec:basic} we will give some basic definitions and terminology 
   used in this paper.
In Section \ref{sec:intuition} we will give the intuition behind the 
    constructions given in Section \ref{sec:construction}.
In Section \ref{sec:Lq} we discuss the application of these constructions
    to the study to the multi-fractal spectrum and the $L^q$-spectrum.
We give an example of a function such that $f_\mu(\alpha)$ is non-concave, and 
    further when $\tau_\mu(q)$ has two points of non-differentiability.

We make some final observations and raise some questions in Section \ref{sec:final}.
Also in this section, we sketch how one might be able to find an alternate construct a $\mu$ such that $\tau_\mu(q)$ has an arbitrary
    number of non-differentiable points.

\section{Basic Definitions and Terminology}
\label{sec:basic}

In this section we will give the basic definitions of (upper/lower) local dimensions,
    and what it means for a measure to be of finite type.
We then introduce the terminology for characteristic vectors and transition 
    matrices that are necessary for the study of local dimensions of measures of finite type.
We will restrict the study to measures of finite type satisfying what is called the 
    standard technical conditions, as they are sufficient for our constructions.
Measures that are of finite type, but do not satisfy the standard technical
    conditions are well studied.
See for instance \cite{HHN18, HHR21, HHS18}. 

\begin{defn}
\label{defn:dimloc}
Given a finite Borel measure $\mu $, by the \textbf{upper local dimension} of 
$\mu $ at $x\in \supp \mu $, we mean the number 
\begin{equation*}
\overline{\dimloc}\mu (x)=\limsup_{r\rightarrow 0^{+}}\frac{\log \mu
([x-r,x+r])}{\log r}.
\end{equation*}
Replacing the $\limsup $ by $\liminf $ gives the \textbf{lower local
dimension}, denoted $\underline{\dimloc}\mu (x)$. If the limit exists, we
call the number the \textbf{local dimension} of $\mu $ at $x$ and denote
this by $\dimloc\mu (x)$.
\end{defn}

By an {\em iterated function system (IFS)}, we mean a finite set of contractions $\{S_i\}$.
Each IFS generates a unique non-empty compact set $K$ such that 
    $K = \cup S_i(K)$.
This is known as the {\em attractor} of the iterated function system.

In this paper, we will focus our attention on equicontractive IFS on $\mathbb{R}$.
That is, the set of contractions are of the form
\begin{equation}
S_{j}(x)=r x+d_{j}:\mathbb{R\rightarrow R}\text{ for }j=0,1,\dots,k,
\label{IFS}
\end{equation}
where $k\geq 1$ and $0< r <1$. 
We will assume that the $S_j$ are ordered so that 
    $d_0 < d_1 < \dots < d_k$.
By rescaling the $d_{j},$ if necessary, we can assume the convex hull of $K$
is $[0,1]$.
In this paper we will further assume that $K = [0,1]$.  

We will associate to $\{S_i\}$ the probabilities $p_i > 0$ with $\sum p_i = 1$.
There is a unique self-similar measure $\mu$ such that 
\begin{equation}
\label{eq:mu}
\mu =\sum_{j=0}^{k}p_{j}\mu \circ S_{j}^{-1}.
\end{equation}
This measure is non-atomic, and is supported on the attractor of the associate IFS, 
    in this case $[0,1]$.

We begin by introducing the notion of finite type and the related concepts
    and terminology that will be used.
A more complete description can be found in \cite{F3, F1, F2, HHM16, HHS18}.

Let $\{S_i\}_{i=0}^k$ be a family of equicontractive maps with ratio of contraction $r$.
We define $\Lambda_n$ as the set of words of length $n$ on the 
    alphabet $\{0,1,\dots, k\}$.
We further define  $\Sigma = \cup_{n=0}^\infty \Lambda_n$ as the set of all finite words 
    over this alphabet.
For $\sigma = (\sigma_1 \sigma_2 \dots \sigma_n) \in \Lambda_n$ we define 
    \[ S_{\sigma} = S_{\sigma_1} \circ \dots \circ S_{\sigma_n}. \]
For $\sigma$ of length $n$, we have that $S_\sigma([0,1]) \subset [0,1]$ is an subinterval of length $r^n$.

Let $\sigma \in \Lambda_n$ be a proper prefix of the word $\tau \in \Lambda_m$ for $n < m$.
As $K = [0,1]$ we see that $S_\tau([0,1])\subsetneq S_\sigma([0,1])$.
We associate to an infinite word $(\sigma_1, \sigma_2, \dots, ) \in \{0,1,\dots, k\}^{\mathbb{N}}$ 
    the unique point $x$ given by 
    \[ \cap_{n = 1}^\infty S_{\sigma_1 \dots \sigma_n}([0,1]).\]
This will be known as an {\em address} of $x$.
It is possible for $x$ to have more than one address.

The finite type condition for iterated function systems was first introduced by Ngai and Wang in \cite{NW}.
The definition we will use is slightly less general, as we are considering 
only equicontractive measures with attractor $K = [0,1]$.
There are many equivalent formulations of this condition.

\begin{defn}
Assume $\{S_{j}\}$ is an IFS as in equation \eqref{IFS}. The words $\sigma ,\tau
\in \Lambda _{n}$ are said to be neighbours if $S_{\sigma }(0,1)\cap S_{\tau
}(0,1)\neq \emptyset $. Denote by $\mathcal{N}(\sigma )$ the set of all
neighbours of $\sigma $. We say that $\sigma \in \Lambda _{n}$ and $\tau \in
\Lambda _{m}$ have the same neighbourhood type if there is a map $f(x)= r^{n-m}x+c$ such that 
\begin{equation*}
\{f\circ S_{\eta }:\eta \in \mathcal{N}(\sigma )\}=\{S_{\nu }:\nu \in 
\mathcal{N}(\tau )\} \ \ \mathrm{and}\ \ f \circ S_\sigma = S_\tau.
\end{equation*}
The IFS is said to be of \textbf{finite type} if there are only finitely
many neighbourhood types.
\end{defn}

We say that the self-similar measure $\mu$, as in \eqref{eq:mu} is of 
    finite type if the set of contractions $\{S_i\}$ are of finite 
    type.
Given $\sigma = (\sigma_1, \sigma_2, \dots, \sigma_n) \in \Sigma$, 
   we define \[ p_{\sigma }=\prod_{i=1}^{n}p_{\sigma _{i}}. \]

As in \cite{HHM16}, we will say the self-similar measure $\mu$ satisfies the {\em standard technical assumptions}
    if it is of finite type, $p_0 = p_k = \min p_i$ and $K = \supp\mu = [0,1]$.
In this paper, we will only consider self-similar measures satisfying the standard technical assumptions,
    as they are sufficient for our constructions.

\begin{defn}
For each positive integer $n$, let $h_{1},\dots ,h_{s_{n}}$ be the
collection of elements of the set $\{S_{\sigma }(0),$ $S_{\sigma }(1):\sigma
\in \Lambda _{n}\}$, listed in increasing order. Put 
\begin{equation*}
\mathcal{F}_{n}=\{[h_{j},h_{j+1}]:1\leq j\leq s_{n}-1 \}.
\end{equation*}
Elements of $\mathcal{F}_{n}$ are called \textbf{net intervals of generation 
}$n$. The interval $[0,1]$ is understood to be the only net interval of
generation $0$.
\end{defn}

If $\Delta =[a,b] \in \mathcal{F}_n$ we see that $a$ and $b$ are of the form 
    $S_\sigma(0)$ or $S_\sigma(1)$ for some $\sigma \in \Lambda_n$.
As $S_\sigma(0) = S_\sigma \circ S_0 (0)$ and $S_\sigma(1) = S_\sigma \circ S_k(1)$ 
    we see that $\mathcal{F}_{n}$ is a refinement of $\mathcal{F}_{n-1}$.
That is, there exists a unique $\widehat \Delta \in \mathcal{F}_{n-1}$ such 
    that $\Delta \subset \widehat \Delta$.
We call $\Delta$ the {\em child} of $\widehat \Delta$, and $\widehat \Delta$ the 
    {\em parent} of $\Delta$.

For $\Delta = [a,b] \in \mathcal{F}_n$, we define the {\em normalized length} 
    of $\Delta$ as 
\begin{equation*}
\ell _{n}(\Delta )=r^{-n}(b-a)\text{.}
\end{equation*}
We define the {\em neighbour set} of $\Delta$ as the ordered tuple 
\begin{equation*}
V_{n}(\Delta )=(a_{1},a_{2},\dots ,a_{m(\Delta)}),
\end{equation*}
where for each $i$ there is some $\sigma \in \Lambda _{n}$ such that $r^{-n}(a-S_{\sigma}(0))=a_{i}$.
Abusing terminology slightly, we will refer to one of these $a_{i}$ as a {\em neighbour} of $\Delta$.
 
Suppose  $\Delta \in \mathcal{F}_n$ has parent $\widehat \Delta$.
It is possible for $\widehat \Delta$ to have multiple children with the 
    same normalized length and the same neighbourhood set as $\Delta$.
We will indicate by $t_n(\Delta)$ which of the identical children $\Delta$ is.

\begin{defn}
The \textbf{characteristic vector} of $\Delta \in \mathcal{F}_{n}$
is defined to be the tuple 
\begin{equation*}
\mathcal{C}_{n}(\Delta )=(\ell _{n}(\Delta ),V_{n}(\Delta ), t_n(\Delta)).
\end{equation*}
The characteristic vector of $[0,1]$ is defined as $(1,(0),1)$.
\end{defn}

We often suppress the $t_n(\Delta)$ to give the {\em reduced characteristic vector}.
It is worth noting that the set of children of $\Delta$ depends only upo  
    $\ell_n(\Delta)$ and $V_n(\Delta)$, and does not depend upon $t_n(\Delta)$.

By the {\em symbolic representation} of a net interval $\Delta \in 
\mathcal{F}_{n}$ we mean the $n+1$ tuple $(\mathcal{C}_{0}(\Delta _{0}),\dots,
\mathcal{C}_{n}(\Delta _{n}))$ where $\Delta _{0}=[0,1]$, $\Delta
_{n}=\Delta $, and for each $j=1,\dots,n$, $\Delta _{j-1}$ is the parent of $
\Delta _{j}$. Similarly, for each $x\in \lbrack 0,1]$, the symbolic
representation of $x$ will be the sequence of characteristic vectors
\begin{equation*}
\lbrack x]=(\mathcal{C}_{0}(\Delta _{0}),\mathcal{C}_{1}(\Delta _{n}),\dots)
\end{equation*}
where $x\in \Delta _{n}\in \mathcal{F}_{n}$ for each $n$ and $\Delta _{j-1}$
is the parent of $\Delta _{j}$. 

We will denote the set of characteristic vectors by 
\begin{equation*}
\Omega =\{\mathcal{C}_{n}(\Delta ):n\in \mathbb{N}\text{, }\Delta \in 
\mathcal{F}_{n}\}\text{.}
\end{equation*}

As $\mu$ is of finite type, we have $\Omega$ is a finite set.

As in \cite{HHM16}, we
define {\em primitive transition matrices}, $T(\mathcal{C}_{n-1}( 
\widehat{\Delta }),$ $\mathcal{C}_{n}(\Delta )),$ for a net interval $\Delta
=[a,b]$ of level $n$ and parent $\widehat{\Delta }=[c,d]$ as follows:

\begin{notation}
\label{not:T}
Suppose $V_{n}(\Delta )=\left(a_{1},\dots,a_{m(\Delta)}\right)$ and $V_{n-1}(\widehat{\Delta }
)=\left(c_{1},\dots,c_{m(\widehat{\Delta})}\right)$. For $j=1,\dots,m(\widehat{\Delta})$ and $k=1,\dots,m(\Delta),$ we set 
\begin{equation*}
T_{jk}:=\left( T(\mathcal{C}_{n-1}(\widehat{\Delta }),\mathcal{C}_{n}(\Delta
))\right) _{jk}=p_{\ell }\text{ }
\end{equation*}
if $\ell \in \mathcal{A}$ and there exists $\sigma \in \mathcal{A}^{n-1}$
with $S_{\sigma }(0)=c-{r} ^{n-1}c_{j}$ and $S_{\sigma \ell
}(0)=a-{r} ^{n}a_{k}$. This is equivalent to saying 
\begin{equation*}
T_{jk}=p_{\ell }\text{ if }c-{r} ^{n-1}c_{j}+{r}
^{n-1}d_{\ell }=a-{r} ^{n}a_{k}.
\end{equation*}
We set $\left( T(\mathcal{C}_{n-1}(\widehat{\Delta }),\mathcal{C}_{n}(\Delta
))\right) _{jk}=0$ otherwise.
\end{notation}

\begin{remark}
It is worth remarking that in \cite{HHM16} that these matrices were normalized
    by dividing by $p_0$.
To be consistent with later works,  have chosen to not do this, and as a result 
    a number of the stated results are modified to adjust for this change.
\end{remark}

It is also worth noting here that $\widehat \Delta$ may have multiple children
    with the same reduced characteristic vector $\Delta$.  
The associated transition matrix will depend on which child $\Delta$ 
    we are transitioning to, and hence depends on $t_n(\Delta)$.

It can be shown in this case that
\begin{itemize}
\item There are only a finite number of such matrices.
\item All non-zero entries are bounded below by $p_0$.
\item There is a non-zero entry in every row and every column of every
    transition matrix
\end{itemize}

Consider a sequence of characteristic vectors $\theta = (\gamma_1, \gamma_2, \dots, \gamma_n)$
     where each $\gamma_{i+1}$ is a child of $\gamma_i$.
We define the {\em transition matrix of $\theta$} as 
\[ T(\theta) = T(\gamma_1, \gamma_2) T(\gamma_2, \gamma_2) \dots T(\gamma_{n-1}, \gamma_n).\]

With this notation, 
\begin{corollary}[Corollary 3.10 of \cite{HHM16}]
\label{corlocdim} Suppose $\mu $ satisfies the standard technical
assumptions. If $x\in \supp \mu $, then 
\begin{equation}
\overline{\dimloc}\mu (x)=\limsup_{n
\rightarrow \infty }\frac{\log \left\Vert T([x|n])\right\Vert }{n\log
{r} }  \label{F2}
\end{equation}
and similarly for the (lower) local dimension.
\end{corollary}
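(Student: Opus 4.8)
The plan is to route everything through net intervals. Write $\Delta_n(x)$ for a net interval of generation $n$ containing $x$. The argument has two halves: (a) $\mu(\Delta_n(x))$ and $\|T([x|n])\|$ agree up to a multiplicative constant uniform in $n$ and $x$; (b) for $r^{n+1}<\rho\le r^n$, $\mu([x-\rho,x+\rho])$ lies between two uniform multiples of $\mu(\Delta_n(x))$. Granting both, I take logarithms and divide by $\log\rho$: with $n=\lfloor\log\rho/\log r\rfloor$ one has $\log\rho=n\log r+O(1)$, and since $\mu$ is non-atomic $\mu(\Delta_n(x))\to 0$ while (see (a)) $\mu(\Delta_n(x))\ge\kappa^n$, so $\log\mu(\Delta_n(x))/(n\log r)$ stays bounded and the additive $O(1)$ errors wash out; hence $\frac{\log\mu([x-\rho,x+\rho])}{\log\rho}-\frac{\log\mu(\Delta_n(x))}{n\log r}\to 0$ as $\rho\to 0^+$. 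Using (a) and taking $\limsup$ (resp. $\liminf$) gives \eqref{F2} (resp. the lower-local-dimension statement). Finite type makes the transition matrices uniformly bounded in size, so all norms are equivalent; since $\mathcal C_0([0,1])=(1,(0),1)$ has a one-point neighbour set, $T([x|n])$ is a nonnegative row vector and I take $\|T([x|n])\|$ to be the sum of its entries.

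For (a), iterating \eqref{eq:mu} gives $\mu(\Delta_n(x))=\sum_{\sigma\in\Lambda_n}p_\sigma\,\mu(S_\sigma^{-1}(\Delta_n(x)))$. As $\Delta_n(x)$ is a net interval, its interior meets no $S_\tau(0),S_\tau(1)$ with $\tau\in\Lambda_n$, so only the $\sigma$ with $\Delta_n(x)\subseteq S_\sigma([0,1])$ contribute; for such $\sigma$, $S_\sigma^{-1}(\Delta_n(x))=[a_k,a_k+\ell_n(\Delta_n(x))]=:I_k\subseteq[0,1]$, where $a_k$ is the neighbour of $\Delta_n(x)$ with $r^{-n}(a-S_\sigma(0))=a_k$ (taking $\Delta_n(x)=[a,b]$). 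Reading off Notation \ref{not:T}, $(T([x|n]))_k=\sum\{p_\sigma:\sigma\in\Lambda_n,\ S_\sigma(0)=a-r^n a_k\}$, so
\begin{equation*}
\mu(\Delta_n(x))=\sum_{k=1}^{m(\Delta_n(x))}\bigl(T([x|n])\bigr)_k\,\mu(I_k).
\end{equation*}
Since $\mathcal C_n(\Delta_n(x))$ lies in the finite set $\Omega$, the vector $(\mu(I_k))_k$ takes finitely many values, and each $I_k$ is a non-degenerate subinterval of $\supp\mu=[0,1]$ (its length comes from the finite set of positive normalised lengths), so $0<\delta\le\mu(I_k)\le 1$ for a uniform $\delta$. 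Hence $\delta\|T([x|n])\|\le\mu(\Delta_n(x))\le\|T([x|n])\|$. The same identity, together with the fact that every row of every transition matrix has an entry $\ge p_0$ (so $\|u\,T(\gamma,\gamma')\|_1\ge p_0\|u\|_1$ for nonnegative $u$), gives $\mu(\Delta_n(x))\ge\kappa\,\mu(\Delta_{n-1}(x))$ with $\kappa:=\delta p_0$, hence $\mu(\Delta_{n+j}(x))\ge\kappa^j\mu(\Delta_n(x))$ for $j\ge 0$ and $\mu(\Delta_n(x))\ge\kappa^n$.

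For (b), put $n=\lfloor\log\rho/\log r\rfloor$. Every generation-$(n+1)$ net interval has length $\le r^{n+1}<\rho$, so $\Delta_{n+1}(x)\subseteq[x-\rho,x+\rho]$ and $\mu([x-\rho,x+\rho])\ge\mu(\Delta_{n+1}(x))\ge\kappa\,\mu(\Delta_n(x))$. For the upper bound, fix an absolute $L$ with $r^{-L}\ge 2/\ell_{\min}$ ($\ell_{\min}>0$ the least normalised length); then generation-$(n-L)$ net intervals have length $\ge 2r^n\ge 2\rho$, so $[x-\rho,x+\rho]$ meets at most two of them, $\Delta_1$ (after reindexing, the one containing $\Delta_{n+1}(x)$, hence an ancestor of $\Delta_n(x)$) and an adjacent $\Delta_2$. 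Then $\mu([x-\rho,x+\rho])\le\mu(\Delta_1)+\mu(\Delta_2)\le\kappa^{-L}\mu(\Delta_n(x))+\mu(\Delta_2)$, and $\mu(\Delta_2)\le C\mu(\Delta_1)$ by the comparability of $\mu$ on adjacent net intervals of a common generation, whence $\mu([x-\rho,x+\rho])\le C'\mu(\Delta_n(x))$. (When $x$ is a common endpoint of net intervals at infinitely many generations, run this for each of its two symbolic representations; adjacency comparability of the two generation-$n$ net intervals at $x$ shows both representations yield the same value, so the statement is unambiguous.)

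The one genuinely non-formal ingredient is this uniform two-sided comparability of $\mu$ on adjacent net intervals of a common generation, and it is where the standard technical assumptions do real work: $p_0=p_k=\min p_i$ is exactly what stops $\mu$ from being anomalously thin near the endpoints of $[0,1]$ — hence near net-interval endpoints — which is what makes the comparison uniform. I would obtain it either from the finite-type estimates in the earlier part of \cite{HHM16} and the references cited there, or directly by the same bookkeeping as in (a), tracking the mass that straddles a shared endpoint. I expect this, rather than the logarithm-chasing, to be the crux.
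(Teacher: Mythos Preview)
The paper does not give its own proof of this statement: it is quoted verbatim as Corollary~3.10 of \cite{HHM16}, with only a remark that the normalisation of the transition matrices has been adjusted. So there is nothing in the present paper to compare your argument against.

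That said, your outline is exactly the standard route taken in \cite{HHM16}: (a) identify $\mu(\Delta_n(x))$ with a weighted sum over the entries of the row vector $T([x|n])$, the weights being $\mu(I_k)$ for finitely many fixed subintervals $I_k\subseteq[0,1]$ determined by the reduced characteristic vector; (b) sandwich $\mu([x-\rho,x+\rho])$ between $\mu$-masses of a bounded number of net intervals a bounded number of generations away. You have also correctly isolated the one place where the standard technical assumptions are genuinely used, namely the uniform comparability of $\mu$ on adjacent net intervals of the same generation; this is Lemma~3.5 in \cite{HHM16} (stated there for boundary essential points, with the general case following from the finite-type structure), and it is indeed where $p_0=p_k=\min_j p_j$ enters. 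Your deferral of that lemma to \cite{HHM16} is appropriate, since the present paper does the same.
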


\begin{remark}
This result is equivalent to \cite{HHM16}, and is adjusted to account for the 
    lack of normalization in Notation \ref{not:T}.
\end{remark}

A {\em periodic point} $x$ is a point with symbolic representation
\begin{equation*}
\lbrack x]=(\gamma _{1},\dots,\gamma _{J},\theta ^{-},\theta ^{-},\dots),
\end{equation*}
where $\theta =(\theta _{1},\dots,\theta _{s},\theta _{1})$ is a cycle
(meaning, the first and last letters are the same) and $\theta ^{-}$ has the
last letter of $\theta $ deleted. We call $\theta $ a {\em period} of $x$.

Note that $T(\theta )$ is a square matrix as $\theta $ is a cycle. We denote by $
sp(T(\theta ))$ its spectral radius, the largest eigenvalue in absolute 
    value of $T(\theta)$.
We have
\begin{prop}[Proposition 4.14 of \cite{HHM16}]
\label{periodic}If $x$ is a periodic point with period $\theta $ of period
length $\beta $, then the local dimension of $\mu $ at $x$ exists and is
given by 
\begin{equation*}
\dimloc    \mu (x)=\frac{\log sp(T(\theta ))}{\beta \log r}.
\end{equation*}
\end{prop}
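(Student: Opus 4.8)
The plan is to use the eventual periodicity of the symbolic representation of $x$ to reduce $\Vert T([x|n])\Vert$ to a power of the single matrix $T(\theta)$, and then to combine Gelfand's formula with Corollary \ref{corlocdim}. Write $[x]=(\gamma_1,\dots,\gamma_J,\theta_1,\dots,\theta_s,\theta_1,\dots,\theta_s,\dots)$ with $\theta=(\theta_1,\dots,\theta_s,\theta_1)$ and period length $\beta=s$. For each large $n$ write $n=J+m\beta+t$ with $0\le t<\beta$. Since $T(\theta)=T(\theta_1,\theta_2)\cdots T(\theta_{s-1},\theta_s)T(\theta_s,\theta_1)$ is exactly one circuit of the cycle, the definition of $T([x|n])$ unwinds to a factorization
\[ T([x|n])=P\,T(\theta)^{m'}\,Q_n, \]
where $P=T(\gamma_1,\gamma_2)\cdots T(\gamma_J,\theta_1)$ depends only on the non-periodic head of $[x]$, $m'\in\{m-1,m\}$, and $Q_n$ ranges over a finite set of ``partial circuit'' matrices, each a product of at most $\beta-1$ primitive transition matrices. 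This bookkeeping is routine and I would record it without fuss.

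The heart of the matter is to show that $P$ and $Q_n$ do not affect the exponential growth rate, i.e.\ that there are constants $0<c\le C$, independent of $n$, with
\[ c\,\Vert T(\theta)^{m'}\Vert\le\Vert T([x|n])\Vert\le C\,\Vert T(\theta)^{m'}\Vert . \]
Work with the entrywise norm $\Vert A\Vert=\sum_{i,j}|A_{ij}|$, which is equivalent to any other norm on this fixed finite-dimensional space. The upper bound is submultiplicativity together with the finiteness of the set of $Q_n$. For the lower bound, recall the structural facts listed just before Corollary \ref{corlocdim}: each primitive transition matrix is nonnegative and has a nonzero entry, at least $p_0$, in every row and every column. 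Two consequences suffice. First, because the root $[0,1]$ has the single neighbour $0$, the matrix $T([x|m])$ is a \emph{row} vector for every $m$; multiplying a strictly positive row vector by a nonnegative matrix with a nonzero entry in every column again gives a strictly positive row vector, so inductively $P$ is a strictly positive row vector all of whose entries are at least $p_0^{L}$, where $L$ is the fixed number of factors in $P$. Second, each $Q_n$, being a product of transition matrices, has a nonzero entry in every row and hence all row sums bounded below by a fixed positive constant. Then, for nonnegative $A$,
\[ \Vert PAQ_n\Vert=\sum_{k,l}P_{1k}A_{kl}\Bigl(\sum_j (Q_n)_{lj}\Bigr)\ge\Bigl(\min_k P_{1k}\Bigr)\Bigl(\min_l\sum_j (Q_n)_{lj}\Bigr)\Vert A\Vert, \]
and taking $A=T(\theta)^{m'}$ gives the lower bound. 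Establishing this lower bound --- in particular, noticing that $T([x|m])$ is a row vector and that positivity propagates under the transition matrices --- is the step I expect to be the real obstacle; the rest is soft.

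Everything else is then immediate. By Gelfand's formula $\Vert T(\theta)^m\Vert^{1/m}\to sp(T(\theta))$ as $m\to\infty$, and $sp(T(\theta))>0$: as a product of nonnegative matrices each having a nonzero entry in every row, $T(\theta)$ has a positive entry in every row, so its digraph has every out-degree at least $1$ and hence contains a cycle, whence $\mathrm{tr}(T(\theta)^\ell)>0$ for some $\ell$ and $sp(T(\theta))>0$. Since $m'=m+O(1)$ and $n=m'\beta+O(1)$, we have $m'/n\to1/\beta$, so the sandwich gives $\Vert T([x|n])\Vert^{1/n}\to sp(T(\theta))^{1/\beta}$, that is,
\[ \lim_{n\to\infty}\frac{\log\Vert T([x|n])\Vert}{n\log r}=\frac{\log sp(T(\theta))}{\beta\log r}. \]
Because this is a genuine limit, Corollary \ref{corlocdim}, applied both as stated and in its lower-local-dimension form, shows $\overline{\dimloc}\mu(x)=\underline{\dimloc}\mu(x)$ equal the right-hand side; hence $\dimloc\mu(x)$ exists and equals $\dfrac{\log sp(T(\theta))}{\beta\log r}$.
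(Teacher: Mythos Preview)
The paper does not give its own proof of this statement; it is quoted from \cite{HHM16} (Proposition 4.14), with only a remark adjusting the normalization. Your argument is correct and follows the natural route one would expect in the cited source: factor $T([x|n])$ into a fixed head $P$, a power $T(\theta)^{m'}$, and a tail $Q_n$ drawn from a finite set; use the positivity structure of the primitive transition matrices (nonzero entry in every row and column, entries bounded below by $p_0$) to sandwich $\Vert T([x|n])\Vert$ between constant multiples of $\Vert T(\theta)^{m'}\Vert$; then invoke Gelfand's formula together with Corollary~\ref{corlocdim}. The key technical step---observing that $T([x|m])$ is always a strictly positive row vector because the root $[0,1]$ has the single neighbour $0$, and that positivity propagates under right multiplication by matrices with a nonzero entry in every column---is handled correctly, and your verification that $sp(T(\theta))>0$ via the digraph cycle argument is a detail that is often left implicit.
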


\begin{remark}
Again, this has been adjusted from \cite{HHM16} to account for the 
    different normalization in Notation \ref{not:T}.

If $x$ is a boundary point, then it will have two different periodic 
    representations.
These formulae are valid for both representations, and are equal.
\end{remark}

We next define a loop class, maximal loop class and essential class.

\begin{defn}
\begin{itemize}
\item A non-empty subset $\Omega ^{\prime }\subseteq \Omega $ is called a
\textbf{\ loop class} if whenever $\alpha ,\beta \in \Omega ^{\prime }$,
then there are reduced characteristic vectors $\gamma _{j}$, $j=1,\dots,n$, such
that $\alpha =\gamma _{1}$, $\beta =\gamma _{n}$ and $(\gamma
_{1},\dots,\gamma _{n}) $, with each $\gamma_{j+1}$ a child of $\gamma_j$, 
    and all $\gamma _{j}\in \Omega ^{\prime }$.

\item A loop class $\Omega ^{\prime }\subseteq \Omega $ is called an\textbf{\
essential class} if, in addition, whenever $\alpha \in \Omega ^{\prime }$
and $\beta \in \Omega $ is any child of $\alpha $, then $\beta \in \Omega
^{\prime }$.

\item We call a loop class \textbf{maximal }if it is not properly contained
in any other loop class.
\end{itemize}
\end{defn}

\begin{remark}
It is worth remarking that in \cite{HHM16} the loop class, maximal loop class and
    essential class were defined as a subset of characteristic vectors.
We have instead chosen to define them as a subset of reduced characteristic vectors.
These two methods are equivalent.
\end{remark}

It is shown in \cite[Lemma 6.4]{F2} that there is always  a unique 
    essential class.

\begin{defn}
If $[x]=(\gamma _{0},\gamma _{1},\gamma _{2},\dots)$ with $\gamma _{j}\in
\Omega _{0}$ for all large $j$, we will say that $x$ is an \textbf{
essential point} (or is \textbf{in the essential class}) and call $x$ a
\textbf{non-essential point} otherwise. The phrase, $x$ is \textbf{in the
loop class }$\Omega ^{\prime },$ will have a similar meaning. An admissible
path will be said to be in a given loop class if all its members are in that
class.
\end{defn}

\section{Intuition to the constructions}
\label{sec:intuition}

In Section \ref{sec:construction} we will give the two construction of $\mu$ such that they
    have unusual local dimension properties.
In this section, we will give the intuition behind these constructions, as well as a simple example.

Consider the self-similar measure given by the IFS 
    $S_j(x) = \frac{x}{R} + \frac{j}{R^2}$ for 
    $j = 0, 1, \dots, R (R-1)$.
Assume that $p_0 = p_{R(R-1)} = \min p_j$.
This measure satisfies the standard technical assumptions.

Some observations that are worth making about this measure are
\begin{itemize}
\item We have that 
    \[ \mathcal{F}_n = \left\{\left[\frac{i}{R^{n+1}},\frac{i+1}{R^{n+1}}\right]
    \right\}_{i=0}^{R^n (R-1) -1} \]
    for $n \geq 1$.
    As such, all reduced characteristic vectors, aside from the initial reduced characteristic
    vector of $[0,1]$ in $\mathcal{F}_0$ are of normalized length $1/R$.
\item For $i \geq R$ and $i \leq R^n (R-1) - R$ we see that the 
    reduced characteristic vector associated to 
    $\left[\frac{i}{R^{n+1}},\frac{i+1}{R^{n+1}}\right]$ is 
    \[ v_e = (1/R, (0,1/R, 2/R, \dots, (R-1)/R)).\]
\item We see that the children of $v_e$ are $R$ copies of $v_e$.
    Hence the essential class is the singleton $\{v_e\}$.
\item There is a loop class at $0$ and a loop class at $1$.  All other 
    points $x \in (0,1)$ are in the essential class.
\end{itemize}

The construction that we will give uses this measure as a starting point,
    and remove a number of the contractions (adjusting the probabilities 
    as necessary).
We will remove contractions in such a way that the remaining contractions
    can be partitions into disjoint sets 
    $\mathcal{A}, \mathcal{B}_1, \dots, \mathcal{B}_d$, where these 
    sets have some very specific properties.

Requirements:
\begin{enumerate}
\item
\label{req:1}
We require that \[ \cup_{S \in \mathcal{A} \cup \mathcal{B}_1 \cup \dots \cup \mathcal{B}_d} S([0,1]) = [0,1]. \]
That is, we require the measure based on this subset of contractions 
    will be of full support, and hence still satisfy the standard technical
    assumptions..
\item 
\label{req:2}
For each $i$, we have that $\# \mathcal{B}_i = 1 \text{ or } 2$.
This is done for simplicity of the construction, and it should be possible to 
choose $\mathcal{B}_i$ to be larger subsets.
\item 
\label{req:4}
For each $i$, define $K_i$ such that $K_i = \cup_{S \in \mathcal{B}_i} S(K_i)$.
This is the attractor associated to $\mathcal{B}_i$.
If $\#\mathcal{B}_i = 1$ then $K_i$ will be a singleton, whereas it 
   $\#\mathcal{B}_i = 2$ then $K_i$ will be a Cantor set 
   with Hausdorff dimension $\log(2)/\log(R)$.

For each $K_i$ and $S \in \mathcal{A}$ we require
\[ S((0,1))\cap K_i = \emptyset.\]
We further require for each $K_i$ and each $S \in \mathcal{B}_j$ with $j \neq i$ that 
\[ S((0,1))\cap K_i = \emptyset.\]
This ensures that if $x \in K_i$ and $x$ is not a boundary point then 
    $x$ has an unique address, and this address only uses 
    contractions from $\mathcal{B}_i$.
If $x \in K_i$ and $x$ is a boundary point, then $x$ may also have addresses
    with tails $S_0^\infty$ or $S_{R (R-1)}^\infty$.  
As the measure satisfies the standard technical assumptions, these addresses
    do not contribute to the local dimension of $x$.
In particular this implies that if $x \in K_i$ then the local dimension of 
    $\mu(x)$ depends only on the probabilities associated with 
    $S \in \mathcal{B}_i$.
\item 
\label{req:5}
If $x \not \in K_i$ for all $i$ then $x$ is in the essential class.
Typically this is done by choosing $\mathcal{A}$ sufficiently dense so that 
    the essential class is the singleton $\{v_e\} = \{(1/R, (0, 1/R, 2/R, \dots, (R-1)/R))\}$
    and $\mathcal{A}$ as large as possible while 
    still ensuring that requirements \eqref{req:1} - \eqref{req:4} hold.
\end{enumerate}

\begin{example}
Let $R = 4$.
We let $S_j(x) = \frac{x}{16} + \frac{j}{16}$.
Define 
\begin{itemize}
\item $\mathcal{B}_1 = \{S_0\}$ with $p_0 = \frac{1}{164}$.  We see in this case that $K_1 = \{0\}$.
\item $\mathcal{B}_2 = \{S_6\}$ with $p_6 = \frac{2}{164}$.  We see in this case that $K_1 = \{1/2\}$.
\item $\mathcal{B}_3 = \{S_{12}\}$ with $p_{12} = \frac{1}{164}$.  We see in this case that $K_1 = \{1\}$.
\item $\mathcal{A} = \{S_j\}$ with $j \in \{1,2,3,4,8,9,10,11\}$ and $p^* = \frac{20}{164}$.
\end{itemize}
One can easily verify properties \eqref{req:1} - \eqref{req:4} hold.
To see that property \eqref{req:5} holds in this particular case,
    we compute the set of all reduced characteristic vectors using the techniques
    of \cite{HHM16}.
They are 
\[ 
\begin{array}{lll}
v_1 = (1, (0)) & 
v_2 = (1/4, (0)) & 
v_3 = (1/4, (0, 1/4))  \\
v_4 = (1/4, (0, 1/4, 1/2)) &
v_5 = (1/4, (0, 1/4, 1/2, 3/4)) & 
v_6 = (1/4, (1/4, 1/2, 3/4)) \\
v_7 = (1/4, (0, 1/2, 3/4)) & 
v_8 = (1/4, (1/4, 3/4)) &
v_9 = (1/4, (0, 1/2)) \\
v_{10} = (1/4, (0, 1/4, 3/4)) & 
v_{11} = (1/4, (1/2, 3/4)) & 
v_{12} = (1/4, (3/4))
\end{array}
\]
Below we list the children, in order, for each reduced characteristic vector
\[
\begin{array}{lll}
\multicolumn{3}{l}{ v_1 \to v_2, v_3, v_4, v_5, v_5, v_6, v_7, v_8, v_9, v_{10}, v_4, v_5, v_5, v_6, v_{11}, v_{12}} \\
v_2 \to v_2, v_3, v_4, v_5, & 
v_3 \to v_5, v_5, v_5, v_5,  &
v_4 \to v_5, v_5, v_5, v_5,  \\
v_5 \to v_5, v_5, v_5, v_5, &
v_6 \to v_5, v_5, v_5, v_5,&
v_7 \to v_5, v_5, v_5, v_5, \\
v_8 \to v_5, v_6, v_7, v_8, &
v_9 \to v_9, v_{10}, v_4, v_5, & 
v_{10} \to v_5, v_5, v_5, v_5,\\
v_{11} \to v_5, v_5, v_5, v_5,  &
v_{12} \to v_5, v_6, v_{11}, v_{12}, 
\end{array}
\]

It is easy to see that the essential class is the single reduced 
    characteristic vector $\{v_5\}$.

We see that there are 4 maximal loop classes outside of the essential class.

The first maximal loop class is $\{v_2\}$.
Points in this loop class have symbolic representation $(\mathcal{C}_0(\Delta_0), 
    \mathcal{C}_1(\Delta_1), \dots) = (v_1, v_2, v_2, v_2, \dots)$. 
This is associated to the point $x = 0$ or equivalently $x \in K_1$.
This point has address $0,0,0,\dots$.
The transition matrix for this loop class is $T(v_2,v_2) = [1/164]$, and hence
    the local dimension at this point is 
    $\frac{\log(1/164)}{\log(1/4)} \approx 3.678776003$.

The second and third maximal loop classes are $\{v_8\}$ and $\{v_9\}$.
Points in this loop class have symbolic
    representation  $(v_1, v_8, v_8, v_8, \dots)$ and $(v_1, v_9, v_9, v_9, \dots)$.
These are both associated to the point $ x= 1/2$, or equivalently $ x \in K_2$.
It is worth noting that because $x$ is a boundary point, there are two
     different representations.
This point has three addresses,
    namely address $8,0,0,0,\dots,$, the address $6,6,6,\dots$ and the address $4,12,12,12,\dots$.
We see that the middle representation contributes the majority of 
    the weight.
The transition matrix for these loop classes are 
\begin{align*}
T(v_8,v_8) & =  \frac{1}{164} \left[ \begin {array}{cc} 2&20\\ 0&1\end {array} \right]  \\
T(v_9,v_9) & =  \frac{1}{164} \left[ \begin {array}{cc} 1&0\\ 20&2\end {array} \right] 
\end{align*}
We note that both of these periods are of period 1, and both have spectral
    radius $\frac{2}{164}$.
As such, the local dimension at this point is
    $\frac{\log(2/164)}{\log(1/4)} \approx 3.178776003$.

The last maximal loop class is $\{v_{12}\}$.
Points in this loop class have symbolic representation $v_1 , v_{12}, v_{12}, v_{12}, \dots$. 
This is associated to the point $x = 1$ or equivalently $x \in K_3$.
This point has address $12,12,12,\dots$.
The transition matrix for this loop class is $T(v_{12},v_{12}) = [1/164]$, and hence
    the local dimension at this point is 
    $\frac{\log(1/164)}{\log(1/4)} \approx 3.678776003$.

All other points are in the essential class.  
Using the techniques of \cite{HHM16} we can show that 
\begin{align*}
[.989157974, 1.017811955]
& \subset \{\dimloc \mu(x): x \text{ in the essential class}\} \\
& \subset [.983436074, 1.017811955]. 
\end{align*}

From this we get that the set of attainable local dimensions is the 
    union of an interval (associated to those $x$ in the essential class)
    and two disjoint points (associated to $K_1$ and $K_3$ for one of these
    two points, and to $K_2$ for the second of these two points).
\end{example}

In this example we exploit the fact that it was a specific case, and did 
    direct computations to ensure that the measure satisfied requirements 
    \eqref{req:1} to \eqref{req:5}.
We also were able to verify that the choice of probabilities 
    ensured that the local dimension of $x \in K_i$ were 
    sufficiently separated by each other, and from the local dimensions
    found in the essential class.
In the next section we will discuss the general construction, where we are 
    not able to do this.
Hence we will need to 
\begin{itemize}
\item Ensure that either $x \in K_i$ for some $i$ or $x$ is in the 
      essential class.
\item Ensure that the local dimension of $x \in K_i$ are bounded away from 
      the local dimension of $x \in K_j$ for $i \neq j$ and bounded away 
      from the local dimension of $x$ in the essential class.
\item Show that if $\#\mathcal{B}_i = 2$ then the set of attainable local 
      dimensions of $K_i$ is an interval, depend only upon the probabilities
      associated to the maps in $\mathcal{B}_i$.
\end{itemize}

\section{Construction}
\label{sec:construction}
\subsection{Isolated points with isolated local dimensions}
\label{ssec:multipoints}

Our goal is to construct a measure $\mu$ such that the set of attainable
     local dimensions is the disjoint union of an interval and an 
     arbitrary number of disjoint points.
Let $R$ be even.
We will do this by constructing a measure such that the essential
    class is $[0,1] \setminus \{0,2/R, 4/R, \dots, (R-2)/R,1\}$.
The points $2i/R$ for $i = 0, 2, \dots, R/2$ are all associated to their own 
    loop classes.
The probabilities can be chosen such that the local dimension at these
    loop points are all distinct (except the two end points $0$ and $1$ which are equal) 
    and bounded away from the local dimensions coming from the essential class.

For $i = 0, 1, \dots, R/2$ we will define $t_i = 2 i (R - 1)$.
We will define $\mathcal{B}_i = \{S_{t_i}\}$ where 
    $S_{t_i}(x) = \frac{x}{R} + \frac{t_i}{R^2}$.
We will associate to $S_{t_i}$ the probability $p_{t_i}$.
We noticed that the attractor for $\mathcal{B}_i$ is 
    $K_i = \left\{\frac{2i}{R}\right\}$.
We will define $\mathcal{A} = \{S_i\}_{i \in I}$ where
$S_i(x) = x/ R + i/R^2$ and 
    $I = \{i: \forall j\ S_i((0,1)) \cap K_j = \emptyset \}$.
We associate to each $S \in \mathcal{A}$ the same probability, $p^*$,
    chosen so that the sum over all probabilities is $1$.
As usual we require $p_0 = p_{t_{R/2}} = \min p_i$.

We see that $\cup_{S \in \mathcal{A} \cup \mathcal{B}_0 \cup \mathcal{B}_{R/2}} S([0,1]) = [0,1]$, from 
    which is follows that requirement \eqref{req:1} holds.
Requirement \eqref{req:2} holds trivially.

We see that $\frac{2j}{R} \not\in S_{t_i}(0,1) = \left(\frac{2i (R-1)}{R^2},\frac{2i(R-1)+R}{R^2}\right)$
    for $i \neq j$.
By construction, we see that for all $S \in \mathcal{A}$ that $\frac{2j}{R} \not\in S(0,1)$.
From this it follows that requirement \eqref{req:4} holds.

It remains to show requirement \eqref{req:5}.
\begin{thm}
\label{thm:multipoints EC}
For $\mu$ as described above, either $x \in K_i$ for some $i$, or 
    $x$ is in the essential class.
\end{thm}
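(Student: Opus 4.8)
The plan is to show that every point not in some $K_i$ eventually has its symbolic representation land in (and stay in) the unique essential class, which we already know is the singleton $\{v_e\}$ with $v_e = (1/R, (0, 1/R, 2/R, \dots, (R-1)/R))$. First I would recall from the discussion preceding the theorem that the full IFS $S_j(x) = x/R + j/R^2$ for $j = 0, 1, \dots, R(R-1)$ has net intervals $\mathcal{F}_n = \{[i/R^{n+1}, (i+1)/R^{n+1}]\}$, that the ``interior'' net intervals all carry the reduced characteristic vector $v_e$, and that the essential class is $\{v_e\}$. After deleting contractions (i.e.\ passing to the sub-IFS $\mathcal{A} \cup \mathcal{B}_0 \cup \dots \cup \mathcal{B}_{R/2}$), one still has $\supp\mu = [0,1]$ by Requirement \eqref{req:1}; the key point to verify is that deleting these contractions does not change which net intervals appear, i.e.\ that $\mathcal{F}_n$ is still the same dyadic-type subdivision into intervals of length $R^{-(n+1)}$, because the endpoints $S_\sigma(0), S_\sigma(1)$ that survive are still exactly $\{i/R^{n+1}\}$. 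This is where the hypothesis that $\mathcal{A}$ contains $S_0$ and $S_{t_{R/2}}$, hence the endpoints $0$ and $1$ are generated at every level, and the hypothesis that $\mathcal{A}$ is ``as dense as possible'' matter.

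Next I would argue directly about a point $x \notin \bigcup_i K_i = \{0, 2/R, 4/R, \dots, (R-2)/R, 1\}$. Since the $K_i$ here are single points, $x \notin \{2i/R\}$ means there is an $\varepsilon > 0$ with $\mathrm{dist}(x, \{2i/R : 0 \le i \le R/2\}) > \varepsilon$. Choose $n$ large enough that $R^{-(n+1)} < \varepsilon/2$; then the net interval $\Delta_{n+1} \in \mathcal{F}_{n+1}$ containing $x$, which has length $R^{-(n+2)}$ and lies within $R^{-(n+1)}$ of $x$ at its parent level, does not meet any $2i/R$, and moreover its parent $\Delta_n$ also does not touch any of these loop points. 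The claim is then that such a $\Delta_n$ has reduced characteristic vector $v_e$. I would establish this by the observation used in the general ``full'' measure: a net interval $[i/R^{n+1}, (i+1)/R^{n+1}]$ with $R \le i \le R^n(R-1) - R$ has neighbour set exactly $(0, 1/R, \dots, (R-1)/R)$ — i.e.\ it is overlapped by exactly $R$ level-$n$ cylinders in the ``staircase'' pattern — and this property only fails near the removed points $2i/R$ or near $0,1$. The deletion of contractions does not destroy this, because the contractions in $\mathcal{A}$ were chosen precisely so that the cylinders covering such an interior interval are still all present (this is the content of Requirement \eqref{req:5} and the maximality of $\mathcal{A}$).

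Having shown $\mathcal{C}_n(\Delta_n) = v_e$ for all sufficiently large $n$ when $x \notin \bigcup K_i$, it follows that $x$ is in the essential class by definition, since $v_e$ is the (unique) essential class and $[x]$ is eventually constant equal to $v_e$. Conversely, if $x \in K_i$ then $x$ is the single point $2i/R$, whose symbolic representation is eventually the loop at a characteristic vector different from $v_e$ (a genuine loop class localized at that point), so it is not in the essential class; but the theorem only asserts the dichotomy ``$x \in K_i$ or $x$ essential,'' so strictly I only need the forward implication. I would close by noting the boundary-point subtlety: $0$ and $1$ are themselves among the $K_i$ (namely $K_0$ and $K_{R/2}$), and a point like $2i/R$ is a boundary point of net intervals, so it has two symbolic representations — but both stabilize to the same loop class, as guaranteed by the remark following Proposition \ref{periodic}, so there is no ambiguity in the classification.

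The main obstacle I anticipate is the ``interior net interval has neighbour set $v_e$'' step after deleting contractions. In the full IFS this is a routine computation, but once $\mathcal{A}$ is a proper (possibly sparse) subset, one must check that the set of level-$n$ cylinders $S_\sigma((0,1))$ that overlap a given interior net interval $\Delta_n$ is unchanged — i.e.\ none of the overlapping cylinders got deleted. This requires knowing exactly which $j \in \{0, \dots, R(R-1)\}$ lie in the index set $I$ (equivalently, which $S_j$ do \emph{not} engulf a removed point $2\ell/R$ in their open image), and checking that for every interior position $i$ with $R \le i \le R^n(R-1) - R$, all $R$ of the level-$n$ cylinders that cover $[i/R^{n+1}, (i+1)/R^{n+1}]$ come from $\mathcal{A} \cup \mathcal{B}_0 \cup \dots \cup \mathcal{B}_{R/2}$. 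I would handle this by an explicit description of $I$: $j \notin I$ exactly when $(j/R^2, (j+R)/R^2) \ni 2\ell/R$ for some $\ell$, i.e.\ when $2\ell R - R < j < 2\ell R$ equivalently when $j \in \{2\ell(R-1)+1, \dots, 2\ell R - 1\} \cup \{2\ell R - R+1,\dots\}$ — roughly a window of $R-1$ indices on each side of each $t_\ell$ — and then verify the covering cylinders of an interior net interval avoid these windows precisely because $\Delta_n$ itself was taken to avoid a neighbourhood of each $2\ell/R$ of radius $\gg R^{-(n+1)}$; the scales match up so that being far from the loop points at the net-interval level forces the covering cylinders' \emph{starting} points to be far from the bad windows as well.
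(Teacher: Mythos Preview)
Your overall plan matches the paper's: establish that the essential class is $\{v_e\}$ and then show that every $x\notin\bigcup_i K_i$ eventually lands in $v_e$. The gap is in how you carry out the second step. In your final two paragraphs you try a direct metric argument---take $n$ large enough that $\Delta_n$ is far from each $2\ell/R$ and then claim $\Delta_n$ has the full neighbour set $(0,1/R,\dots,(R-1)/R)$---but the justification you offer conflates level-$1$ and level-$n$ data. You correctly describe which \emph{first-level} indices $j$ are excluded from $I$, yet the covering cylinders of $\Delta_n$ are level-$n$ words $\sigma=(\sigma_1,\dots,\sigma_n)$, and each $\sigma_m$ must individually lie in the reduced alphabet. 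Knowing that $\Delta_n$ is metrically far from the $K_i$ at scale $R^{-(n+1)}$ says nothing about $\sigma_1$: for $x=2/R+\epsilon$ with $\epsilon$ small, the level-$1$ net interval containing $x$ is the paper's $v_1'=(1/R,(0,2/R))$, which has only two neighbours, not $R$; this deficit is inherited by the leftmost child of $v_1'$, which is again $v_1'$, and persists for as many levels as $\epsilon$ is small. Your proposed check that ``all $R$ level-$n$ cylinders covering an interior position survive'' therefore fails as stated, and the claim that ``being far from the loop points forces the covering cylinders' starting points to be far from the bad windows'' is not the right invariant.

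The paper avoids this by analysing the transition graph rather than arguing metrically. The key device (the paper's Step~2) is a one-step lemma: any reduced characteristic vector $(1/R,A)$ with $\{0,1/R\}\subseteq A$ or $\{(R-2)/R,(R-1)/R\}\subseteq A$ has all $R$ of its children equal to $v_e$. With this in hand, Step~3 explicitly lists the level-$1$ characteristic vectors: adjacent to each $K_i$ sit the two-neighbour vectors $v_i=(1/R,((2i-1)/R,(R-1)/R))$ and $v_i'=(1/R,(0,2i/R))$, while every other child of $[0,1]$ already satisfies the Step~2 hypothesis. Each $v_i$ (resp.\ $v_i'$) then has exactly one child equal to itself---the self-loop pinning down the point $2i/R$---and all remaining children satisfy the Step~2 hypothesis. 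Hence any symbolic path that does not stay forever in a $v_i$ or $v_i'$ loop reaches $v_e$ within two further transitions. That short combinatorial lemma about two consecutive boundary neighbours is the missing ingredient in your sketch; it replaces the ill-posed metric check and makes the induction go through cleanly.
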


\begin{proof}
We first sketch the major steps of the proof, after which we will
    complete the proof.

Steps:
\begin{enumerate}
\item We first show that the essential class is composed of a 
    single reduced characteristic vector, namely $\{v_e\} = \{(1/R, (0, 1/R, \dots, (R-1)/R))\}$.
\label{step:1}
\item We next show that for any reduced characteristic vector $v = (1/R, A)$ where 
    $\{0, 1/R\} \subseteq A$ or $\{(R-2)/R, (R-1)/R\} \subseteq A$ we have
    $v$ has $R$ children, all of them $v_e$.
\label{step:2}
\item We lastly show that either $x \in K_i$ for some $i$, or $x$ is an 
    essential point.
\label{step:4}
\end{enumerate}

{\bf Step \ref{step:1}}:
By construction the minimal normalized length of a reduced characteristic
    vector is $1/R$.
By considering the children of $\Delta_0 = [0,1]$, we see that the 
    child located at $[1/R, 1/R + 1/R^2]$ is of the form 
    $v_e := (1/R, (0, 1/R, 2/R, \dots, (R-1)/R))$.
Further we observe that the children of $v_e$ are $R$ copies of $v_e$.
As the essential class is unique, 
    we have that the essential class is the singleton reduced
    characteristic vector $\{v_e\}$.

{\bf Step \ref{step:2}}:
This step is a routine calculation.

{\bf Step \ref{step:4}}:
Consider the children of $[0,1]$.
We will classify these reduced characteristic vector as follows.
We let $v_0'$ be the reduced characteristic vector associated to the 
    net interval adjacent to $0$ on the right, where $\{0\} = K_0$.
For $i = 1, 2, \dots, R/2-1$ we let $v_i$ and $v_i'$ be the reduced 
    characteristic vectors associated to the net intervals adjacent to 
    $2i/R$ on the left and right respectively, where 
    $K_i = \{2i/R\}$.
We let $v_{R/2}$ be the reduced characteristic vector associated to the 
    net interval adjacent to $1$ on the left, where $\{1\} = K_{R/2}$.
We note that all other net intervals are of the form $(1/R, A)$ where
    either $\{0, 1/R\} \subset A$ or $\{(R-2)/R, (R-1)/R\} \subset A$.

We see that $v_0' = (1/R, (0))$.
For $i = 1, 2, \dots, R/2-1$, we 
    see that $v_i = (1/R, ((2i-1)/R, (R-1)/R))$ and
    $v_i' = (1/R, (0, 2i/R))$.
We see that $v_{R/2} = (1/R, ((R-1)/R))$.

We can check that the children of $v_i$ are of the form 
    $(1/R, A)$ where $\{(R-2)/R, (R-1)/R)\} \subset A$ with the 
    exception of the far right child, which is a copy of $v_i$.
Similarly, the children of $v_i'$ are of the form 
    $(1/R, A)$ where $\{0, 1/R\} \subset A$ with the 
    exception of the far left child, which is a copy of $v_i'$.

This proves that either $x \in K_i$ for some $i$, or $x$ is an 
    essential point, as desired.
\end{proof}

\begin{lemma}
Let $t_i = 2 i (R - 1)$ as before.  
For $x \in K_i$ we have that $x$ has local dimension $\frac{\log p_{t_i}}{\log 1/R}$.
\end{lemma}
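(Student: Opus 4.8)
The plan is to use the symbolic description of points in $K_i$ together with Corollary \ref{corlocdim}, exploiting the fact that $\mathcal{B}_i = \{S_{t_i}\}$ is a singleton. First I would pin down the symbolic representation of the unique point $x = 2i/R \in K_i$. By Theorem \ref{thm:multipoints EC} and its proof, the net intervals shrinking down to $x$ are governed by the loop class at $x$: for $i = 1,\dots,R/2-1$ this is the loop on $v_i$ (or equivalently $v_i'$), for $i = 0$ the loop on $v_0'$, and for $i = R/2$ the loop on $v_{R/2}$. In each case the relevant period $\theta$ has length $\beta = 1$, so $x$ is a periodic point and Proposition \ref{periodic} applies: $\dimloc\mu(x) = \log sp(T(\theta))/\log r$ with $r = 1/R$. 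It therefore remains to identify the spectral radius of the single primitive transition matrix attached to the self-loop.

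The key step is to compute that transition matrix, or at least its spectral radius. Here I would use requirement \eqref{req:4}: since $x \in K_i$ is interior, its only address not ending in $S_0^\infty$ or $S_{t_{R/2}}^\infty$ uses exclusively the contraction $S_{t_i}$, i.e. the address is $t_i, t_i, t_i, \dots$. The normalized positions recorded in the neighbour sets $V_n(\Delta)$ track which $S_\sigma(0)$ lie in the closure of $\Delta$; under the standard technical assumptions the contributions from the boundary addresses (tails $S_0^\infty$, $S_{t_{R/2}}^\infty$) carry weight a power of $p_0 = \min p_i$ and, crucially, are dominated — this is exactly the phenomenon already invoked in the intuition section (``these addresses do not contribute to the local dimension''), and it is formalized in \cite{HHM16}. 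Concretely, the transition matrix $T(\theta)$ for the self-loop at $v_i$ is (after the normalization convention of Notation \ref{not:T}) a matrix whose only entry corresponding to the address $t_i t_i \cdots$ equals $p_{t_i}$, with the remaining entries either $0$ or equal to products of $p_0$'s (as in the $2\times 2$ examples $T(v_8,v_8)$, $T(v_9,v_9)$ displayed earlier). Since $p_{t_i} > p_0 \geq$ any such product bounded by $p_0$ raised to a positive power, and the matrix is (upper- or lower-) triangular with $p_{t_i}$ the dominant diagonal entry, we get $sp(T(\theta)) = p_{t_i}$.

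Combining these, $\dimloc\mu(x) = \log p_{t_i}/\log(1/R)$ for the point $x = 2i/R$, which is the unique non-boundary-ish point of $K_i$; the boundary cases $i = 0$ ($x=0$) and $i = R/2$ ($x=1$) are handled identically, with $T(\theta) = [p_0] = [p_{t_0}]$ and $[p_{t_{R/2}}]$ respectively, and the two periodic representations of these boundary points give the same value by the remark following Proposition \ref{periodic}. Finally, since $K_i$ is a singleton, there is nothing more to check: every $x \in K_i$ is this point.

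I expect the main obstacle to be making precise the claim that the boundary addresses $S_0^\infty$ and $S_{t_{R/2}}^\infty$ are genuinely dominated in the transition matrix, so that $sp(T(\theta)) = p_{t_i}$ rather than something larger coming from an off-diagonal interaction. This requires either an explicit description of the neighbour set of $v_i$ and its self-loop transition matrix (a finite, mechanical but slightly fiddly computation of the form carried out in the Example), or a clean appeal to the structure theory of \cite{HHM16} for loop classes whose symbolic dynamics is a single self-loop. Everything else — the reduction to Proposition \ref{periodic}, the period length being $1$, the value $\log r = \log(1/R)$ — is immediate from the results already quoted.
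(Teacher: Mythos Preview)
Your approach is correct and is essentially the paper's own proof. The paper proceeds slightly more directly: it simply writes down the explicit transition matrices
\[
T(v_0',v_0')=[p_0],\qquad
T(v_i,v_i)=\begin{pmatrix} p_{t_i} & p^* \\ 0 & p_0\end{pmatrix},\qquad
T(v_i',v_i')=\begin{pmatrix} p_0 & 0 \\ p^* & p_{t_i}\end{pmatrix},\qquad
T(v_{R/2},v_{R/2})=[p_{t_{R/2}}],
\]
and reads off $sp(T)=p_{t_i}$ from triangularity together with the standard technical assumption $p_0\le p_{t_i}$, then invokes Proposition~\ref{periodic}. One small inaccuracy in your write-up: the remaining entries are not ``products of $p_0$'s'' --- the off-diagonal entry is $p^*$, which in this construction is typically \emph{larger} than $p_{t_i}$ --- but this is harmless, since for a triangular matrix the off-diagonal entries play no role in the spectral radius; so the obstacle you anticipate never actually arises.
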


\begin{proof}
Using the notation of $v_i$ and $v_i'$ as above, we see that 
\begin{align*}
T(v_0', v_0') & = \left[ \begin {array}{c}  p_0 \end {array} \right]  \\
T(v_i, v_i) & =   \left[ \begin {array}{cc} p_{t_i} & p^* \\ 0 & p_0 \end {array} \right]  \\
T(v_i', v_i') & = \left[ \begin {array}{cc} p_0 & 0 \\ p^* & p_{t_i} \end {array} \right]  \\
T(v_{R/2}, v_{R/2}) & =   \left[ \begin {array}{c} p_{R/2} \end {array} \right] 
\end{align*}
By the standard technical assumptions, we have that $p_0 = p_{R/2} \leq 
    p_{t_i}$.
For $x \in K_i$ we have that
\[ \dimloc \mu(x) = \frac{\log sp(T(v_i, v_i)}{\log(1/R)}
                  = \frac{\log sp(T(v_i', v_i')}{\log(1/R)}
                  = \frac{\log p_{t_i}}{\log (1/R)} \]
as required.
\end{proof}

\begin{thm}
\label{thm:multipoints}
There exists a choice of probabilities $p_0, p_1, \dots, p_{R/2}$ and 
    $p^*$ such that the set of attainable local dimensions of $\mu$ is 
    a disjoint union of an interval, (coming from the essential class)
    and $R/2$ singletons.
Moreover the singletons are of the form $\frac{\log(p_{t_i})}{\log(1/R)}$.
    for $i = 0, 1, \dots, R/2$ with 
    $p_{t_0} = p_{t_{R/2}}$..
\end{thm}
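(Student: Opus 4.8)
The plan is to combine Theorem~\ref{thm:multipoints EC} and the preceding lemma with the standard description of the local dimensions realised inside the essential class, and then to choose the probabilities so that the ``loop values'' $\log p_{t_i}/\log(1/R)$ are mutually distinct and all separated from the essential interval. First I would record what the two preceding results give: by Theorem~\ref{thm:multipoints EC} every $x\in\supp\mu=[0,1]$ either lies in some $K_i$ or is essential, and by the preceding lemma $\dimloc\mu(x)=\beta_i:=\log p_{t_i}/\log(1/R)$ whenever $x\in K_i$. Hence the set of attainable local dimensions is $E\cup\{\beta_0,\dots,\beta_{R/2}\}$, where $E:=\{\dimloc\mu(x):x\text{ essential}\}$, and $\beta_0=\beta_{R/2}$ since $p_{t_0}=p_{t_{R/2}}$. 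So it suffices to show: (a) $E$ is a closed interval; and (b) the probabilities can be chosen so that $\beta_0,\beta_1,\dots,\beta_{R/2-1}$ are pairwise distinct and none of them lies in $E$.

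For (a): by Step~\ref{step:1} of the proof of Theorem~\ref{thm:multipoints EC} the essential class is the single reduced characteristic vector $v_e$, which has $R$ children, each a copy of $v_e$; let $M_1,\dots,M_R$ be the corresponding transition matrices. For essential $x$, Corollary~\ref{corlocdim} gives $\dimloc\mu(x)=\lim_n \log\|M_{j_1}\cdots M_{j_n}\|/(n\log(1/R))$, where $(j_1,j_2,\dots)$ records the child choices of $x$ past its preperiod. Since the $M_j$ are nonnegative with a nonzero entry in every row and column, the set of growth rates $\lim_n\|M_{j_1}\cdots M_{j_n}\|^{1/n}$ realised along such sequences is exactly the interval $[\lambda_{\min},\lambda_{\max}]$ between the lower and joint spectral radii of $\{M_1,\dots,M_R\}$ (the usual description of the local dimension set of an essential class; cf.\ \cite{F2,HHM16,HR}). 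Hence $E=[\log\lambda_{\max}/\log(1/R),\,\log\lambda_{\min}/\log(1/R)]$ is a closed interval, and following a nonzero path through the factors gives $\lambda_{\min}\ge p_0$, so $\sup E\le\log p_0/\log(1/R)=\beta_0$.

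For (b): the key point, which I would justify by writing $M_1,\dots,M_R$ down explicitly (the case $R=4$ is the Example of Section~\ref{sec:intuition}), is that $\sup E$ stays below a constant $B=B(R)$ as the probabilities vary, \emph{independently} of how small $p_0$ is chosen; intuitively, the essential net intervals carry a fixed proportion of $\mu$ at every scale, so their mass decays only geometrically rather than like $p_0^n$. Granting this, I would take $p_{t_0}=p_{t_{R/2}}=p_0$ so small that $\beta_0=\beta_{R/2}=\log p_0/\log(1/R)$ exceeds $B+1$, and then take $p_{t_1},\dots,p_{t_{R/2-1}}$ to be distinct values with $p_0<p_{t_i}$ and with the $\beta_i=\log p_{t_i}/\log(1/R)$ distinct and all lying strictly between $B$ and $\beta_0$. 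Such a choice is compatible with the normalisation $\#\mathcal A\cdot p^*+\sum_{i=0}^{R/2}p_{t_i}=1$ together with $p_0=\min p_i\le p^*$: one simply takes all the $p_{t_i}$ small, which makes the $\beta_i$ large as required while leaving $p^*\approx(\#\mathcal A)^{-1}$ bounded below (and $\ge p_{t_i}$). With such a choice $\sup E\le B<\beta_i<\beta_0$ for every $i$, so $\beta_0=\beta_{R/2},\beta_1,\dots,\beta_{R/2-1}$ are $R/2$ distinct numbers, none in $E$, each of the form $\log p_{t_i}/\log(1/R)$; this is exactly the asserted disjoint union.

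The step I expect to be the main obstacle is the uniform bound $\sup E\le B(R)$ in (b) — equivalently, that $\lambda_{\min}(\{M_1,\dots,M_R\})$ stays bounded below as $p_0\to 0$. This is where the combinatorics of the IFS genuinely enter: one must compute the essential transition matrices (a finite computation once $v_e$ is identified, modelled on Step~\ref{step:2} of the proof of Theorem~\ref{thm:multipoints EC}) and check that no infinite product of them decays as fast as $p_0^n$. The bound can fail for very small $R$ — for $R=2$ the relevant matrices include an upper-triangular member with $p_0$ on the diagonal, so $\lambda_{\min}=p_0$, $\sup E=\beta_0$, and $\beta_0$ is merely an endpoint of $E$ — so the statement should be read for $R\ge 4$.
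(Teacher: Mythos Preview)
Your overall plan coincides with the paper's: use Theorem~\ref{thm:multipoints EC} and the lemma to reduce to separating the loop values $\beta_i$ from $\sup E$, bound $\sup E$ uniformly in the $p_{t_i}$, and then push the $p_{t_i}$ to $0$. The one place where you leave a genuine gap is exactly what you flag as ``the main obstacle'': the uniform bound $\sup E\le B(R)$. You propose to obtain it by writing out all $R$ essential matrices explicitly and checking that no infinite product decays like $p_0^n$; this would work, but the paper's proof shows it is not necessary. The observation is that for \emph{every} essential transition matrix $T=T(v_e,v_e)$ (say the $r$th child), the first two rows already place a $p^*$ entry in every column: $T_{1,k}=p^*$ for $k\le R-r$ and $T_{2,k}=p^*$ for the remaining $k$. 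Consequently every column sum of each $M_j$ is at least $p^*$, so $\|M_{j_1}\cdots M_{j_n}\|\ge (p^*)^n$ and hence $\sup E\le \log p^*/\log(1/R)$. Since taking all $p_{t_i}$ small forces $p^*\ge 1/\#(\mathcal A\cup\mathcal B_0\cup\dots\cup\mathcal B_{R/2})$, this gives the uniform bound $B=B(R)$ for free, without any delicate lower joint spectral radius analysis. Once you have this, your paragraph (b) goes through verbatim.

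Two minor remarks. First, your part (a) (that $E$ is an interval) is not argued in the paper's proof of this theorem at all; the paper simply invokes the general fact from \cite{HR} that the attainable set is a finite union of intervals, so the essential contribution is automatically one interval once it is disjoint from the $\beta_i$. Second, your caveat about $R=2$ is unnecessary here: the $p^*$-in-every-column observation above already rules out any product decaying like $p_0^n$, for every even $R\ge 2$.
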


\begin{proof}
As we can take $p_{t_{i}}$ arbitrarily small,, we may
     assume that $p^* \geq p_{t_{i}}$ for $i = 0, 1, \dots, R/2$.
Let $x$ be a point in the essential class.
Let $\widehat \Delta$ be a net interval 
    for $x$ with child $\Delta$ where 
    $\mathcal{C}_{n}(\Delta) = \mathcal{C}_{n-1}(\widehat \Delta) = v_e$.
Let $\Delta$ be the $r$th child of $\widehat \Delta$.
(That is, $t_n(\Delta) = r$).
Consider the transition matrix 
    $T := T(\mathcal{C}_{n-1}(\widehat\Delta), \mathcal{C}_n(\Delta))$.
We observe that $T_{1,k} = p^*$ for $k = 1, 2, \dots, R-r$ and 
    $T_{2, k} = p^*$ for $k =  R-r+1, R-r+2, \dots, R-1$.
This implies that 
    $sp(T(\mathcal{C}_{n-1}(\widehat\Delta),\mathcal{C}_n(\Delta))$ is 
    bounded below by $p^*$, for all transitions matrices from the 
    essential class to the essential class.
This in turn implies that the set of attainable local dimensions 
    for $x$ in the essential class is bounded above by 
    $\frac{\log(p^*)}{\log(1/R)}$.
As $p^*$ is bounded below by $1/\#(\mathcal{A} \cup 
    \mathcal{B}_0 \cup \dots \cup \mathcal{B}_{R/2})$ we have an upper bound
    for attainable local dimensions of $x$ in the essential class.
Let this upper bound by $B$.

We see that $p_{t_i}$ can be taken arbitrarily small, and hence, the 
    local dimensions for $x \in K_i$ can be taken to be arbitrarily 
    large.
We choose $p_{t_i}$ sufficient small so that $\dimloc \mu (x) >B$
    for all $x \in K_i$.
The result follows by taking $p_1, p_2, \dots, p_{R/2}$ distinct and 
    $p_0 = p_{R/2}$.
\end{proof}

\subsection{Cantor sets with an interval of local dimensions}
\label{ssec:multi-intervals}

Our goal is to construct a measure $\mu$ such that the set of attainable
     local dimensions is the disjoint union of an arbitrary
     number of intervals.
Let $R \equiv 2 \mod 6$.
We will do this by constructing a measure such that the non-essential
    points are $0$, $1$ and $(R-2)/6$ Cantor sets $K_i$ with endpoints
    $(6i-4)/R$ and $6i/R$.
Each of these $K_i$ will be the attractor of a set $\mathcal{B}_i$, 
    and will be associated to its own loop class,
The local dimensions within this loop class is an interval dependent only
    upon two probabilities associated with $S \in \mathcal{B}_i$.
The probabilities can be chosen such that the local dimension at these
    $K_i$ are all distinct intervals and bounded away from the local dimensions coming from the essential class.

We define $\mathcal{B}_0 = \{S_0\}$ where $S_0(x) = x /R$.
We associate to $S_0$ the probability $p_0$.
We see that the attractor of $\mathcal{B}_0$ is $K_0 = \{0\}$.

Similarly we define $\mathcal{B}_{(R+4)/6} = \{S_{R(R-1)}\}$ where $S_{R(R-1)}(x) = x /R + R(R-1)/R^2$.
We associate to $S_{R(R-1)}$ the probability $p_{R(R-1)}$.
We see that the attractor of $\mathcal{B}_{(R+4)/6}$ is $K_{(R+4)/6} = \{1\}$.

For $i = 1, \dots, (R-2)/6$ we will define 
    $t_i = (6i-4) (R - 1)$ and $s_i = 6i (R - 1)$.
We will define $\mathcal{B}_i = \{S_{t_i}, S_{s_i}\}$ where 
    $S_{t_i}(x) = \frac{x}{R} + \frac{t_i}{R^2}$, and $S_{s_i}$ is defined similarly.
We will associate to $S_{t_i}$ the probability $p_{t_i}$ 
                 and to $S_{t_i}$ the probability $p_{s_i}$.
We noticed that the fixed point of $S_{t_i}$ is $(6i-4)/R$ and 
    the fixed point of $S_{s_i}$ is $6i/R$.
We see that $K_i$ is a middle $(R-2)/R$ Cantor set with end points
    $(6i-4)/R$ and $6i/R$.
In particular, it is contained within 
    \[
    \left[\frac{R(6i-4)}{R^2}, \frac{R(6i-4)+4}{R^2}\right] \cup
     \left[\frac{R6i-4}{R^2}, \frac{R6i}{R^2}\right].\]

As before, we define we will define $\mathcal{A} = \{S_i\}_{i \in I}$ where
$S_i(x) = x/ R + i/R^2$ and 
    $I = \{i: \forall j\ S_i((0,1)) \cap K_j = \emptyset\}$.
We associate to each $S \in \mathcal{A}$ the same probability, $p^*$,
    chosen so that the sum over all probabilities is $1$.
As usual we require $p_0 = p_{R(R-1)} = \min p_i$.

One can check that requirement \eqref{req:1} holds.
Requirement \eqref{req:2} holds trivially.

We see that $K_j \subset \left[\frac{6j-4}{R}, \frac{6j}{R}\right]$, 
    $S_{t_i}(0,1) = \left(\frac{(6i-4) (R-1)}{R^2},\frac{(6i-4)(R-1)+R}{R^2}\right)$ and 
    $S_{s_i}(0,1) = \left(\frac{6i (R-1)}{R^2},\frac{6i(R-1)+R}{R^2}\right)$.
A quick computation ensures that 
    $K_j \cap S_{t_i}(0,1) = K_j \cap S_{s_i}(0,1) = \emptyset$ for $i \neq j$.
By construction, we see that for all $S \in \mathcal{A}$ that $K_i \cap S(0,1) = \emptyset$.
From this it follows that requirement \eqref{req:4} holds.

It remains to show requirement \eqref{req:5}.

\begin{thm}
\label{thm:multi-intervals EC}
For $\mu$ as described above, either $x \in K_i$ for some $i$, or 
    $x$ is in the essential class.
\end{thm}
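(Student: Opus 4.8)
The plan is to mirror the structure of the proof of Theorem~\ref{thm:multipoints EC}, adapting the three-step skeleton to the presence of two-element blocks $\mathcal{B}_i$. First I would establish that the essential class is the singleton $\{v_e\}$ with $v_e = (1/R, (0, 1/R, \dots, (R-1)/R))$: exactly as before, the child of $[0,1]$ sitting at $[1/R, 1/R+1/R^2]$ has characteristic vector $v_e$, the $R$ children of $v_e$ are all copies of $v_e$, and uniqueness of the essential class (Lemma~6.4 of \cite{F2}) finishes this step. Next, I would record the routine fact (the analogue of Step~\ref{step:2}) that any reduced characteristic vector of normalized length $1/R$ whose neighbour set $A$ contains $\{0, 1/R\}$ or $\{(R-2)/R, (R-1)/R\}$ has all $R$ of its children equal to $v_e$; this is again a direct computation from Notation~\ref{not:T} and the fact that $\mathcal{A}$ has been chosen as dense as possible away from the $K_i$.

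The substantive step is the classification of the children of $[0,1]$ and the tracking of the loop classes attached to each $K_i$. I would enumerate the net intervals of generation one that are \emph{not} of the ``good'' form above: these are precisely the intervals abutting $0$, abutting $1$, and the (at most four) intervals meeting the two ends of each Cantor set $K_i$, i.e.\ the net intervals adjacent to $(6i-4)/R$ and to $6i/R$ on each side. Because $K_i$ is a middle-$(R-2)/R$ Cantor set contained in $\left[\frac{R(6i-4)}{R^2},\frac{R(6i-4)+4}{R^2}\right]\cup\left[\frac{6iR-4}{R^2},\frac{6iR}{R^2}\right]$, a point $x\in K_i$ that is not an endpoint of one of the two defining subintervals stays, at every generation, inside net intervals whose symbolic data is governed only by the two maps $S_{t_i},S_{s_i}$ of $\mathcal{B}_i$ (this uses requirement~\eqref{req:4}, that $S((0,1))\cap K_i=\emptyset$ for every $S\in\mathcal{A}$ and every $S\in\mathcal{B}_j$, $j\neq i$). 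For each of the finitely many exceptional vectors $w$ adjacent to an endpoint of $K_i$, I would check that the children of $w$ are again either of the good form $(1/R,A)$ with $\{0,1/R\}$ or $\{(R-2)/R,(R-1)/R\}$ in $A$, or one of the finitely many vectors again adjacent to $K_i$, or the vectors adjacent to $0$ or $1$ (in the extremal blocks $\mathcal{B}_0,\mathcal{B}_{(R+4)/6}$). Since the only vectors from which one cannot reach the good family in one step are these $K_i$-adjacent ones and the $\{0\},\{1\}$ vectors, it follows that any $x$ whose symbolic representation never enters $\{v_e\}$ must have $[x]$ eventually confined to the finite sub-collection of vectors attached to a single $K_i$ (or to $0$ or $1$) — equivalently $x\in K_i$ — and every other $x$ is an essential point.

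The main obstacle I anticipate is the bookkeeping in this last step: unlike the one-map case, a two-map block $\mathcal{B}_i$ generates a small but genuine sub-system of reduced characteristic vectors (a loop class, not a single fixed vector), and I must verify that this sub-system is ``closed from the inside except at the $K_i$-boundary'' — i.e.\ that no child of a $K_i$-adjacent vector escapes to a $K_j$-adjacent vector for $j\neq i$ nor to the essential class without the corresponding point actually lying in $K_i$. This reduces to the disjointness estimates $K_j\cap S_{t_i}(0,1)=K_j\cap S_{s_i}(0,1)=\emptyset$ for $i\neq j$ and $K_i\cap S(0,1)=\emptyset$ for $S\in\mathcal{A}$ already recorded before the theorem, together with a finite check on the transition structure of each $\mathcal{B}_i$; once those are in hand the argument closes exactly as in Theorem~\ref{thm:multipoints EC}.
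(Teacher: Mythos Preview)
Your skeleton matches the paper exactly through the first two steps: the essential class is the singleton $\{v_e\}$, and any vector $(1/R,A)$ with $\{0,1/R\}\subseteq A$ or $\{(R-2)/R,(R-1)/R\}\subseteq A$ has all children equal to $v_e$.

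Where you diverge from the paper is in the classification step. You picture the exceptional generation-one net intervals near $K_i$ as ``at most four'' intervals of normalized length $1/R$, one on each side of each endpoint $(6i-4)/R$ and $6i/R$, and you explicitly anticipate that the resulting loop class is ``a small but genuine sub-system of reduced characteristic vectors (a loop class, not a single fixed vector).'' The paper instead makes the crucial observation that the \emph{entire convex hull} $[(6i-4)/R,\,6i/R]$ of $K_i$ is itself a single net interval $c_i$, with reduced characteristic vector of normalized length $4/R$ and a single neighbour. Consequently the maximal loop class attached to $K_i$ is the singleton $\{c_i\}$, with exactly two self-transitions --- the $1\times 1$ matrices $[p_{t_i}]$ and $[p_{s_i}]$ --- one for each map in $\mathcal{B}_i$. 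The only other exceptional vectors near $K_i$ are $v_i$ (immediately left of $c_i$) and $v_i'$ (immediately right), each of normalized length $1/R$; the paper checks that the children of $c_i$ are $c_i,v_i'$ on the far left, $v_i,c_i$ on the far right, and good vectors in between, while $v_i$ and $v_i'$ behave exactly as in the single-point construction.

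This is not a fatal gap --- your more cautious enumeration could in principle be carried out --- but it misses the structural simplification that makes the argument short and, more importantly, that feeds directly into the next theorem: the interval of local dimensions on $K_i$ drops out immediately from the two scalar transition matrices, which you would not see without isolating $c_i$.
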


\begin{proof}
As before, the essential class is composed of a 
    single reduced characteristic vector, namely $\{v_e\} = \{(1/R, (0, 1/R, \dots, (R-1)/R))\}$.
Further, any reduced characteristic vector of normalized length $1/R$ of the 
    form  $v = (1/R, A)$ where 
    $\{0, 1/R\} \subseteq A$ or $\{(R-2)/R, (R-1)/R\} \subseteq A$ 
    will have $R$ children, all of them $v_e$.

Consider the children of $[0,1]$.
We will classify these reduced characteristic vector as follows.
We let $v_0'$ be the reduced characteristic vector associated to the 
    net interval adjacent to $0$ on the right, where $\{0\} = K_0$.
We let $v_{(R+4)/6}$ be the reduced characteristic vector associated to the 
    net interval adjacent to $1$ on the left, where 
    $\{1\} = K_{(R+4)/6}$.

For $i = 1, 2, \dots, (R-2)/6$ 
    we let $v_i$ be the reduced 
    characteristic vectors associated to the net intervals 
    $\left[(6 i - 4)/R - 1/R^2, (6i-4)/R\right]$.
This is adjacent on the left to the convex hull of $K_i$.
This has the form $(1/R, ((6i-5)/R, (R-1)/R))$
We let $c_i$ be the reduced 
    characteristic vectors associated to the net intervals 
    $\left[(6 i - 4)/R, 6i/R\right]$.
This is the convex hull of $K_i$.
This has the form $(4/R, (6i-4)/R)$.
Lastly, we let $v_i'$ be the reduced 
    characteristic vectors associated to the net intervals 
    $\left[6i/R, 6i/R+1/R^2\right]$.
This is adjacent on the right to the convex hull of $K_i$.
This has the form $(1/R, (0, 6i/R))$.

We note that all other net intervals are of the form $(1/R, A)$ where
    either $\{0, 1/R\} \subset A$ or $\{(R-2)/R, (R-1)/R\} \subset A$.

We can check that the children of $v_i$ are of the form 
    $(1/R, A)$ where $\{(R-2)/R, (R-1)/R)\} \subset A$ with the 
    exception of the far right child, which is a copy of $v_i$.
Similarly, the children of $v_i'$ are of the form 
    $(1/R, A)$ where $\{0, 1/R\} \subset A$ with the 
    exception of the far left child, which is a copy of $v_i'$.

The two left most children of $c_i$ are of the form $c_i, v_i'$.
The two right most children of $c_i$ are also of them form $v_i, c_i$.
All other children of $c_i$ are of the form 
    $(1/R, A)$ where $\{(R-2)/R, (R-1)/R)\} \subset A$ or 
    $\{0, 1/R\} \subset A$.

This proves that either $x \in K_i$ for some $i$, or $x$ is an 
    essential point, as desired.
\end{proof}

\begin{thm}
Let $t_i = (6i-4) (R - 1)$ and $s_i = 6i (R - 1)$.
The set of attainable local dimension 
    of $x\in K_i$ is \[ \left[\frac{\log \max(p_{s_i},p_{t_i})}{\log 1/R},
       \frac{\log \min(p_{s_i},p_{t_i})}{\log 1/R}\right] .\]
\end{thm}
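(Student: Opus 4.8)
The plan is to reduce the computation of $\dimloc\mu(x)$ for $x\in K_i$ to a question purely about the two-map IFS $\mathcal{B}_i=\{S_{t_i},S_{s_i}\}$, and then to identify the resulting expression with the classical local dimension formula for a self-similar measure on a middle-$\tfrac{R-2}{R}$ Cantor set satisfying the open set condition. By requirement~\eqref{req:4} (as established in Theorem~\ref{thm:multi-intervals EC} and the discussion preceding it), any non-boundary $x\in K_i$ has a unique address, and that address uses only the two letters $t_i$ and $s_i$; the boundary points of $K_i$ pick up extra addresses with tails $S_0^\infty$ or $S_{R(R-1)}^\infty$, but under the standard technical assumptions those tails contribute weight $p_0^n$ per level, which is dominated, so they do not affect the local dimension. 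Hence $\mu$ restricted to a neighbourhood of such an $x$ is, up to a bounded multiplicative constant, the self-similar measure $\nu$ on $K_i$ determined by the weights $(p_{t_i},p_{s_i})$ (after renormalising so that they sum to $1$ — renormalisation only shifts things by a bounded factor and does not change the local dimension).

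The key steps, in order, are: (i) use the net-interval structure from Theorem~\ref{thm:multi-intervals EC} to write, for $x\in K_i$ with address $(\omega_1,\omega_2,\dots)$, $\omega_j\in\{t_i,s_i\}$, the estimate $\mu([x-r,x+r]) \asymp \prod_{j=1}^{n}p_{\omega_j}$ where $n=n(r)$ is the generation with $r\asymp R^{-n}$ — this follows because $c_i$ and its descendants inside $K_i$ carry a ``Cantor-like'' transition structure whose transition matrices along the address are exactly the $1\times 1$ scalars $p_{t_i}$ or $p_{s_i}$ (from the two-by-two-corner description of the children of $c_i$, the relevant norms $\|T([x|n])\|$ are, up to the boundary-tail terms already dispensed with, $\prod p_{\omega_j}$); (ii) invoke Corollary~\ref{corlocdim}, so that
\[
\dimloc\mu(x)=\lim_{n\to\infty}\frac{\sum_{j=1}^n\log p_{\omega_j}}{n\log r}
\]
whenever the limit exists; (iii) observe that $\sum_{j=1}^n\log p_{\omega_j}/n$ ranges, as $(\omega_j)$ varies over all sequences in $\{t_i,s_i\}^{\mathbb N}$ and $n\to\infty$, over exactly the closed interval $[\log\min(p_{t_i},p_{s_i}),\log\max(p_{t_i},p_{s_i})]$: the constant sequences $t_i^\infty$ and $s_i^\infty$ (periodic points, handled cleanly by Proposition~\ref{periodic} since the periods are the $1\times1$ matrices $[p_{t_i}]$ and $[p_{s_i}]$) realise the two endpoints, and for any target value $\alpha$ strictly between, a suitable ``block'' sequence with the right asymptotic frequency of $t_i$'s realises $\alpha$ as an honest limit; (iv) divide by $\log r=\log(1/R)\cdot(-1)$ — wait, $\log r<0$ — so dividing flips the interval, giving
\[
\left[\frac{\log\max(p_{s_i},p_{t_i})}{\log(1/R)},\ \frac{\log\min(p_{s_i},p_{t_i})}{\log(1/R)}\right],
\]
which is the claimed set (note $\log\max\le\log\min$ is false; rather $\max\ge\min$ forces $\log\max\ge\log\min$, and dividing by the negative quantity $\log r=-\log(1/R)$ reverses the order, so the left endpoint is indeed the smaller one as written).

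I expect the main obstacle to be step (i): justifying rigorously that the net-interval norms $\|T([x|n])\|$ for $x\in K_i$ are comparable to $\prod_{j\le n}p_{\omega_j}$, uniformly in $x$ and $n$. This requires knowing the full list of transition matrices among $c_i$ and the characteristic vectors reachable from it while staying ``near'' $K_i$, and checking that the extra addresses (the $S_0^\infty$ / $S_{R(R-1)}^\infty$ tails at boundary points of $K_i$, and more importantly the geometry by which $[x-r,x+r]$ may poke slightly outside the net interval containing $x$) contribute only a bounded factor. The cleanest route is to note that, because $S_{t_i}((0,1))$ and $S_{s_i}((0,1))$ are disjoint from every $S((0,1))$ with $S\in\mathcal A\cup\bigcup_{j\ne i}\mathcal B_j$ and are themselves separated by a gap of normalised length bounded below, the IFS $\mathcal B_i$ satisfies the open set condition with a uniform separation constant; then the comparability $\mu([x-r,x+r])\asymp \prod_{j\le n(r)}p_{\omega_j}$ is the standard estimate for open-set-condition self-similar measures (cf.\ \cite{Fa}), with the $\asymp$ constants absorbing both the overlap with neighbouring net intervals and, at boundary points, the geometric-tail contributions which are $O(p_0^{n})=o\!\big(\prod_{j\le n}p_{\omega_j}\big)$ since $p_0\le\min(p_{t_i},p_{s_i})$ by the standard technical assumptions (with strict inequality unless $K_i$ degenerates, and in the equality case the tail merely matches the dominant term up to a constant). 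Steps (ii)–(iv) are then routine.
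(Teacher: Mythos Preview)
Your proposal is correct and follows essentially the same route as the paper: identify the loop class governing $K_i$, read off the transition matrices, apply Corollary~\ref{corlocdim} and Proposition~\ref{periodic}, and observe that the resulting averages $\frac{1}{n}\sum\log p_{\omega_j}$ fill out the stated interval. The paper's argument is considerably terser because it notes directly (from the description of the children of $c_i$ in Theorem~\ref{thm:multi-intervals EC}) that $\{c_i\}$ is itself a singleton maximal loop class whose two self-transition matrices are the $1\times 1$ scalars $[p_{t_i}]$ and $[p_{s_i}]$; consequently $\|T([x|n])\|$ is, up to the fixed initial factor $T(v_1,c_i)$, \emph{equal} to $\prod_{j\le n}p_{\omega_j}$, not merely comparable to it. The ``main obstacle'' you anticipate in step~(i) therefore does not arise, and the open-set-condition comparison argument you sketch as a fallback is unnecessary. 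Boundary points are handled in one line via the remark following Proposition~\ref{periodic} rather than by a separate domination estimate on the $p_0^n$ tails.
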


\begin{proof}
Let $c_i$ be as above.
We see that $\{c_i\}$ is a (maximal) loop class, and that the two 
    transition matrices from $c_i$ to itself are 
\[ T(c_i, c_i) = \left[ p_{t_i} \right] \text{  and  }
   T(c_i, c_i) = \left[ p_{s_i} \right]\]
There exist boundary points in $K_i$ which also have representations 
    with tails 
    $v_i', v_i', v_i', \dots$ and $v_i, v_i, v_i, \dots$.
From the remark following Proposition \ref{periodic} the local dimension of boundary points  
    based $T(v_i', v_i')$ and $T_(v_i, v_i)$ will the same as the periodic 
    representation coming form $T(c_i, c_i)$.
The result follows.
\end{proof}

\begin{thm}
There exists a choice of probabilities $p_0, p_1, \dots, p_{(R+4)/6}$ and 
    $p^*$ such that the set of attainable local dimensions of $\mu$ is 
    a disjoint union of an intervals.
\end{thm}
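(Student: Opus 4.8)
The plan is to mimic the argument of Theorem~\ref{thm:multipoints}, now using the interval structure of each $K_i$ rather than a single loop point. First I would note that by Theorem~\ref{thm:multi-intervals EC} every $x$ is either in some $K_i$ or in the essential class, so the set of attainable local dimensions is exactly
\[
\left(\bigcup_{i=1}^{(R-2)/6} \left[\frac{\log\max(p_{s_i},p_{t_i})}{\log(1/R)},\frac{\log\min(p_{s_i},p_{t_i})}{\log(1/R)}\right]\right)
\cup \{0,1\}\text{-loop points} \cup E,
\]
where the two loop points at $0$ and $1$ contribute $\frac{\log p_0}{\log(1/R)}$ (equal, since $p_0=p_{R(R-1)}$ by the standard technical assumptions), and $E$ is the set of local dimensions attained in the essential class, which by \cite{HR} (or the finite-type machinery of \cite{HHM16}) is a union of intervals. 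So the task reduces to choosing the probabilities so that all of these pieces are pairwise disjoint.

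The key step is the same upper-bound estimate for the essential class used in Theorem~\ref{thm:multipoints}: for any transition matrix $T$ from $v_e$ to $v_e$, the structure of $v_e$'s children forces a row of $R-1$ entries each equal to $p^*$ to be distributed across two rows, so $sp(T)\geq p^*$, and $p^*$ is bounded below by $1/\#(\mathcal{A}\cup\mathcal{B}_0\cup\dots\cup\mathcal{B}_{(R+4)/6})$. Hence there is a constant $B$ (depending only on $R$ through the cardinality of the alphabet) with $\dimloc\mu(x)\leq B$ for every $x$ in the essential class. Likewise the loop point at $0$ has local dimension $\frac{\log p_0}{\log(1/R)}$. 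Now I would choose the probabilities $p_{t_i},p_{s_i}$ (hence the associated Cantor-set intervals) to be very small: since each interval for $K_i$ is $\left[\frac{\log\max(p_{s_i},p_{t_i})}{\log(1/R)},\frac{\log\min(p_{s_i},p_{t_i})}{\log(1/R)}\right]$, making all of $p_{t_i},p_{s_i}$ small pushes every one of these intervals above $B$, and making the ratios $\max/\min$ of consecutive blocks $\{p_{t_i},p_{s_i}\}$ live in disjoint ranges (e.g. inductively pick block $i+1$ with both probabilities smaller than $\min(p_{s_i},p_{t_i})^2$) separates the $K_i$-intervals from one another. One must also keep $p_0=p_{R(R-1)}=\min p_j$; this is fine because the loop point $\frac{\log p_0}{\log(1/R)}$ is then the \emph{largest} of all these dimensions, so one simply shrinks $p_0$ last, below everything else, to separate it too — or, if one prefers $\{0,1\}$ to sit inside the lowest $K_i$ interval, that is harmless since the statement only claims a disjoint union of intervals and a single degenerate interval is still an interval.

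The main obstacle I expect is bookkeeping rather than mathematics: one has to verify that shrinking the $p_{t_i},p_{s_i}$ and $p_0$ does not destroy the standard technical assumptions (it does not, since $p_0=p_{R(R-1)}=\min p_j$ is preserved and the $p^*$'s merely renormalize upward) and that the resulting $p^*$ stays bounded away from $0$ uniformly, so that the bound $B$ for the essential class really is independent of the choice (it is, once the alphabet — hence $\mathcal{A}$ — is fixed, because $p^*\geq 1/\#(\mathcal{A}\cup\mathcal{B}_0\cup\dots)$ regardless of how small the other probabilities become). A secondary subtlety is making sure the finitely many $K_i$-intervals are genuinely nondegenerate and pairwise disjoint, which just requires $p_{t_i}\neq p_{s_i}$ within each block and a sufficiently large gap between the magnitude scales of different blocks; I would phrase this as an explicit inductive choice. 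With these checks in place the union of the $(R-2)/6$ nondegenerate $K_i$-intervals, the essential-class interval union, and the degenerate interval $\{\frac{\log p_0}{\log(1/R)}\}$ is a disjoint union of intervals, giving the claim; the number of intervals is at least $(R-2)/6+1$, which is arbitrarily large as $R\to\infty$ through residues $2\bmod 6$.
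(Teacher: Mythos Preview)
Your proposal is correct and follows essentially the same approach as the paper: the paper's entire proof is the single sentence ``This follows using the same techniques as Theorem~\ref{thm:multipoints},'' and you have simply spelled out how that argument transfers, including the $sp(T)\geq p^*$ lower bound on the essential class and the subsequent choice of small $p_{t_i},p_{s_i}$ to separate the $K_i$-intervals from the essential-class range and from one another. Your additional bookkeeping (preserving $p_0=p_{R(R-1)}=\min p_j$, the uniform lower bound on $p^*$, and the inductive separation of the $K_i$-intervals) goes beyond what the paper writes out but is exactly the verification the one-line proof is implicitly asking the reader to perform.
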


This follows using the same techniques as Theorem \ref{thm:multipoints}.

\section{Implications for the multi-fractal spectrum and $L^q$-spectrum}
\label{sec:Lq}

A self-similar measure is known to be {\em exact dimensional}.  
That is, there exists a unique $\alpha_0$ such that $\mu$-almost 
    all $x \in \supp \mu$ have local dimensions $\alpha_0$.
For $\alpha \neq \alpha_0$, the set of $x$ such that $\dimloc \mu(x) = \alpha$
    is necessarily of $\mu$-measure $0$.
This being said, this set can often be quite complicated, and have positive
    Hausdorff dimension.
To that end, we define the {\em multi-fractal spectrum of $\mu$} as
    \[ f_\mu (\alpha) = \dim_H\{ x \in \supp \mu : \dimloc \mu(x) = \alpha\}. \]
See for instance \cite{Fa}.

We further define the {\em $L^q$-spectrum of $\mu$} as 
    \[ \tau_{\mu}(q) = \liminf_{r \to 0} \frac {\log \sup \sum_i \mu (B(x_i, r))^q}{\log r} \]
where the supremum is taken over all disjoint families of balls of radius 
    $r$, centered at $x_i \in \supp \mu$.
If $f_\mu(\alpha)$ is concave, and 
    \[ f_\mu(\alpha) = \tau^*_\mu(\alpha) = \inf_{q \in \mathbb{R}} (\alpha q - \tau_\mu(q)) \] 
then we say that $\mu$ satisfies the {\em multi-fractal formalism}.
Here $\tau^*_\mu(\alpha)$ is the Legendre transform of $\tau_\mu(q)$.
This is known to hold for example when $\mu$ satisfies the open set condition.
In the case of our constructions, we often have that $f_{\mu}(\alpha)$ is not concave, 
    and that these measure do not satisfy the multi-fractal formalism.

Despite this, we can often say quite precise things about both $f_\mu(\alpha)$ and 
    $\tau_\mu(q)$, as we will show in the next example.

\begin{example}
\label{ex:Lq}
Consider the construction in Section \ref{ssec:multi-intervals} with $R = 14$.

Set $p_0 = p_{13\cdot 14} = \frac{1}{1150}$.
We see that $p_0$ is associated to $K_0 = \{0\}$, which has
    Hausdorff dimension $0$.
We see that $p_{182}$ is associated to $K_3 = \{1\}$, which has
    Hausdorff dimension $0$.
The local dimension at $x = 0$ and at $x = 1$ is 
    $\frac{\log(1/1150)}{\log(1/14)} \approx 2.67$.

We set $p_{s_1} = p_{t_1} = \frac{3}{1150}$,
This will be associate to the Cantor set $K_1$ with end points at
    $\frac{2}{14}$ and $\frac{6}{14}$.
This Cantor set has a ratio of contraction of $1/14$, and has 
    Hausdorff dimension $\frac{\log 2 }{\log 14}$.
As $p_{s_1} = p_{t_1}$ we see that the local dimensions for $K_1$
    are all equal, and are equal to $\frac{\log (3/1150)}{\log(1/14)}
    \approx 2.25$.

We set $p_{s_2} = \frac{5}{1150}$ and $p_{t_2} = \frac{7}{1150}$.
This will be associate to the Cantor set $K_2$ with end points 
    at $\frac{8}{14}$ and $\frac{12}{14}$.
As before, $K_2$ has
    Hausdorff dimension $\frac{\log 2 }{\log 14}$.
We see that the set of attainable local dimensions for $K_2$
    is an interval,
    \[ \left[\frac{\log(7/1150)}{\log(1/14)}, 
       \frac{\log(5/1150)}{\log(1/14)} \right] \approx
      [1.93, 2.06]. \]

For all maps $S_i \in \mathcal{A}$ we set $p^* = \frac{10}{1150}$.
One can check that the sum of the probabilities is equal to $1$.

Using the techniques of \cite{HHM16} we can show that 
\begin{align*}
[.9913,1.009]
& \subset \{\dimloc \mu(x): x \text{ in the essential class}\} \\
& \subset [.9666,1.038]
\end{align*}

Let us consider $f_\mu(\alpha)$ and $\tau_\alpha(q)$ for this measure.

We see that $f_\mu(\alpha) = 0$ for $\alpha \in (0,0.9666) \cup (1.038, 1.93) 
    \cup (2.06, 2.25) \cup (2.25, \infty)$.
We see that $f_\mu(\alpha) = 1$ for some $\alpha \in (0.9666,1.038)$.
This is coming from the fact that $\mu$ is exact dimensional, and 
    $K = [0,1]$ has Hausdorff dimension 1.

We see that $f_\mu(\alpha)$ is a concave function on $[1.93, 2.06]$, with a 
    maximum of $\log(2)/\log(14)$ coming from the dimension of $K_2$.
This can be computed explicitly by Chapter 11 of \cite{Fa}, and is
    given in Figure \ref{fig:K1}.

\begin{figure}
\includegraphics[scale=0.3, angle=270]{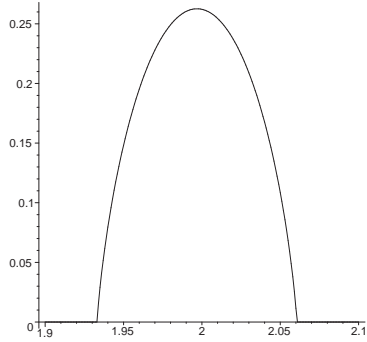}
\caption{$f_\mu(\alpha)$ associated to $K_2$}
\label{fig:K1}
\end{figure}

We see that we have a jump discontinuity at 
    $\alpha = \log(3/1150)/\log(1/14) \approx 2.25 $.
Namely 
\[ f_\mu\left(\frac{\log(3/1150)}{\log(1/14)}\right) = \frac{\log(2)}{\log(14)}\] and 
\[ \lim_{\alpha \to \frac{\log(3/1150)}{\log(1/14)}} f_{\mu}(\alpha) = 0.\]
This is coming from $K_1$.

As $K_0 = \{0\}$ and $K_3 = \{1\}$ both have Hausdorff dimension $0$, 
    we have that $f_\mu\left(\frac{\log(1/1150)}{\log(1/14)}\right) = 0$.
    
Using the techniques of \cite{HHS19}, we see that 
\[ 
\tau_\mu(q) = 
\min(\tau_E(\mu, q), 
\tau_{K_0}(\mu, q), 
\tau_{K_1}(\mu, q),  
\tau_{K_2}(\mu, q), 
\tau_{K_3}(\mu, q)).  \]
where $\tau_A(\mu, q)$ is the $L^q$-spectrum restricted to a set $A$ 
    where $A$ is the set of points coming from a (maximal) loop class.
Here $E$ is the essential class.
See Definition 4.4 of \cite{HHS19} for a precise definition.

With the exception of $\tau_E(\mu, q)$, all $\tau_{K_i}(q)$ can be explicitly
    computed using the techniques of Chapter 11 of \cite{Fa},
    we get that 
\begin{align*}
\tau_{K_0}(q) & = -\frac{\log\left((1/1150)^q\right)}{\log 14} \\
\tau_{K_1}(q) & = -\frac{\log\left(2\cdot (3/1150)^q\right)}{\log 14} \\
\tau_{K_2}(q) & = -\frac{\log\left((5/1150)^q+(7/1150)^q\right))}{\log 14} \\
\tau_{K_3}(q) & = -\frac{\log\left((1/1150)^q\right)}{\log 14} 
\end{align*}

In the case of $\tau_E(\mu,q)$ we have from Proposition 4.6 of \cite{HHS19}
that 
\begin{align*}
q d_{\min} & \geq \tau_E(\mu, q)  \geq q d_{\min} - 1  \text{  if } q \geq 0\\
q d_{\max} & \geq \tau_E(\mu, q)  \geq q d_{\max} - 1  \text{  if } q \leq 0
\end{align*}
where $d_{\min} = \inf \{ \dimloc\mu(x): x \in \text{essential class}\}$ and 
   $d_{\max}$ is defined similarly as the supremum.
In this case we do not have a precise value for $d_{\min}$ or $d_{\max}$.
We have that $d_{\min} \in [0.9666, 0.9913]$ and $d_{\max} \in [1.009, 1.038]$.
Hence we have that 
\begin{align*}
0.9913 q & \geq \tau_E(\mu, q)  \geq 0.9666 q - 1  \text{  if } q \geq 0\\
1.009 q & \geq \tau_E(\mu, q)  \geq 1.038 q - 1  \text{  if } q \leq 0
\end{align*}

We know that $\tau_{\mu}(0)$ is equal to negative of the box 
    dimension of the self similar set, in this case $-1$.

By Corollary 4.11 of \cite{HHS19} we have that 
    $\tau_E(q)/q \to d_{\min}$ as $q\to \infty$ and
    $\tau_E(q)/q \to d_{\max}$ as $q\to -\infty$.

As $\tau_E(q)$ is concave, this gives us much tighter bounds.
Namely
\begin{align*}
\min(1.038q-1,0.9913 q) & \geq \tau_E(\mu, q)  \geq 0.9666 q - 1  \text{  if } q \geq 0\\
\min(0.9666q-1,1.009 q) & \geq \tau_E(\mu, q)  \geq 1.038 q - 1  \text{  if } q \leq 0
\end{align*}

\begin{figure}
\includegraphics[scale=0.3, angle=270]{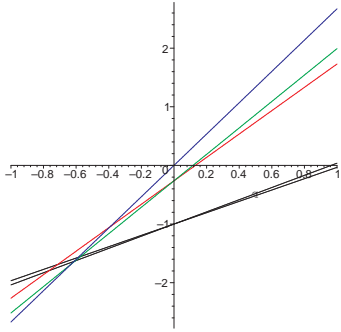}
\caption{$\tau_\mu(q)$ for $q \in [-1,1]$}
\label{fig:Lq}
\end{figure}

\begin{figure}
\includegraphics[scale=0.3, angle=270]{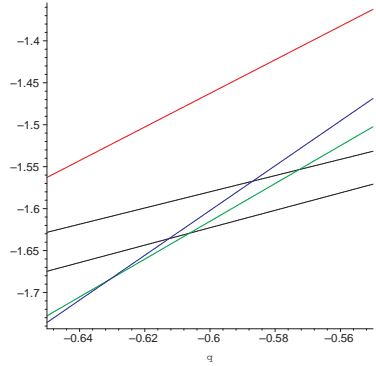}
\caption{$\tau_\mu(q)$ for $q \in [-0.65,-0.55]$}
\label{fig:LqSmall}
\end{figure}

In Figure \ref{fig:Lq}, we have that the blue curve is coming from $K_0$ and $K_3$, 
    the green curve is coming from $K_1$, and the red curve is coming from $K_2$.
We have that the curve $\tau_E(\mu, q)$ is bounded between the two black curves.

In Figure \ref{fig:LqSmall}, we provide the same map, except restricted to $q \in (-0.65,-0.55)$.
We notice that there exists two points $q_0$ and $q_1$ such that 
\begin{align*}
\tau_{\mu}(q) & = \tau_{K_0}(q) \text{ if } q \leq q_0 \\
\tau_{\mu}(q) & = \tau_{K_1}(q) \text{ if } q_0 \leq q \leq q_1 \\
\tau_{\mu}(q) & = \tau_{E}(q) \text{ if } q_1 \leq q 
\end{align*}
For no values of $q$ is $\tau_{\mu(q)} = \tau_{K_2}(q)$.

We see that $\tau_\mu(q)$ has distinctly 
    different characteristics on $(-\infty, q_0), (q_0, q_1)$ and $(q_1, \infty)$.
We observe that the slope of $\tau_\mu(q)$ on these three regions is 
    bounded by the maximal and minimal local dimensions associate with $K_0$, $K_1$ 
    and the essential class respectively.
As these local dimensions are bounded away from each other, we see that $q_0$ and $q_1$ are 
    both points of non-differentiability.
\end{example}

\begin{remark}
It is worth highlighting the difference between the multiple points of non-differentiability from
    this construction, and that coming from \cite{Testud}.
In \cite{Testud}, the interval $[0,1]$ was subdivided into two regions, $A$ and $B$.
Testud consider the case when the ratio of contraction was an odd or an even integer.
We mention the even case only.  The odd is similar.
In this case $B$ was a Cantor formed by the $\ell$-adic expansions involving only digits $\ell/2+1, \ell/2+2, \dots, \ell-1$.
The remainder was ${0}$ union the essential class (using our notation).
Hence $\tau_\mu(q) = \min(\tau_A(q), \tau_B(q))$.
The probabilities were chosen so that $\tau_A(q) = \tau_B(q)$ had $\ell/2$ solution, and hence $\ell/2$ non-differential points in the $L^q$-spectrum.
The construction was done in such a way that very precise things could be said about $\tau_A(q)$, which was not the case in our construction.

Our construction is somewhat different, in that we subdivided $[0,1]$ into an arbitrary number of regions.
Our two points of non-differentiability came from solutions to $\tau_A(q) = \tau_B(q)$ and 
    for various choices of $A$ and $B$.
\end{remark}

\section{Observations and final comments}
\label{sec:final}

We will abuse notation and say that a set $A < B$ if for all $x \in A$ and for all $y \in B$ we have $x < y$.
Consider two adjacent $K_i, K_j$ with $K_i < K_j$.
One technique that we exploited was to ensure that $K_i$ and $K_j$ were sufficiently far apart
    so that where exists $R$ consecutive maps $S \in \mathcal{A}$ where $K_i < S((0,1)) < K_j$.
This helped to ensure that all points $x$ with $K_i < x < K_j$ were essential points.

Similarly if $\# \mathcal{B}_i > 1$ and $S_1, S_2 \in \mathcal{B}_i$ adjacent, 
    we again required $S_1$ and $S_2$ to be sufficiently separated 
    so that where exists $R$ consecutive maps $S \in \mathcal{A}$ where $S_1(K_i) < S((0,1)) < S_2(K_i)$.
This again helped to ensure that all points $x$ with $K_i < x < K_j$ were essential points.

In this paper our constructions used partitions $\mathcal{A} \cup \mathcal{B}_1 \cup \dots \cup \mathcal{B}_d$ 
    where all $\# \mathcal{B}_i = 1$ or all $\#\mathcal{B}_i = 2$.
It is relatively straightforward using these ideas to include other combinations and other
    constructions, and with possible large sets $\mathcal{B}_i$..
Our sets $\mathcal{K}_i$ were either singletons if $\# \mathcal{B}_i = 1$ 
    or a Cantor set with positive Hausdorff dimension if $\#\mathcal{B}_i = 2$.
It is possible to construct $\mathcal{B}_i$s such that the associate 
    $K_i$ is a countable set.
Other more complicated structures are also possible.
We would need to use the observations above to ensure that 
    $S_1(K_i)$ and $S_2(K_i)$ are sufficient separated for $S_1, S_2 \in 
    \mathcal{B}_i$.

Consider $\tau_\mu(q)$ in Example \ref{ex:Lq}.
In this example, we had two points of non-differentiability.
It {\em might} be possible to use this sort of construction to construct a $\mu$ where
    $\tau_\mu(q)$ has an arbitrary number of points of non-differentiability.
This would use a different construction than the one used in \cite{Testud}.
To do this, consider a curve $\tau_{K_i}(q)$.
We see that the slope of this curve is bounded by the local dimensions on $K_i$.
By choosing the probabilities associate to $K_i$ carefully, this is controlable.
Further, we see that the $y$-intercept of this curve is minus the Hausdorff dimension of 
    $K_i$.  
That is $\tau_{K_i}(0) = -\dim_H(K_i)$.
By choosing $\mathcal{B}_i$ so that $\#\mathcal{B}_i = t$, and still 
    satisfying the open set condition, we can get $\dim_H(K_i) = \log(t)/\log(R)$, and hence
    $\tau_{K_i}(0) = -\log(t)/\log(R)$.
There are most likely restricting on which values of $t$ that can be 
    chosen, and the appropriateness of the maps in $\mathcal{B}_i$ to 
    ensure requirements \eqref{req:1}, \eqref{req:4}, and \eqref{req:5} still hold.
Lastly, we can subtly alter the definition of $\mathcal{A}$ to be
    $\mathcal{A} = \{S_i\}_{i \in I}$ where $S_i(x) = x / R + \frac{i}{R^2 N}$
    and $I = \{i : \forall j\ S_i((0,1)) \cap K_j = \emptyset\}$.
The resulting measure should still have the necessary properties to allow this 
    type of decomposition.
By choosing $N$ large, the local dimensions in the essential class should be smoothed out.
That is, if $[a_N, b_N]$ is the set of attainable local dimensions of the essential class,
    we should have $a_N \nearrow 1$ and $b_n \searrow 1$ as $N \to \infty$.
If we can control the slope and $y$-intercept of $\tau_{K_i}(q)$, 
    and $\tau_E(q)$, it should be 
    possible to choose a sequence of $K_i$ where the $\tau_\mu(q) = \tau_{K_i}(q)$ on 
    different regions, leading to multiple points of non-differentiability.

Although not proven, in all of these constructions, we have that the set of attainable local
    dimensions of the essential class is a non-degenerate interval.
It is not clear if this is in fact always the case, or if it is possible 
    for the set of local dimensions of the essential class to be an isolated
    point, while at the same time having loop classes with other 
    local dimensions.

\section{Acknowledgements}

I would like to thank Alex Rutar for bringing \cite{Testud} to my attention.

\end{document}